\documentclass[letter, 11pt]{article}
\usepackage[english]{babel}
\usepackage[utf8]{inputenc}

\usepackage[margin = 1.2 in, top = 1 in, bottom = 1.2 in]{geometry}
\makeatletter
\g@addto@macro\normalsize{%
  \setlength\abovedisplayskip{7pt}
  \setlength\belowdisplayskip{7pt}
  \setlength\abovedisplayshortskip{7pt}
  \setlength\belowdisplayshortskip{7pt}
}
\makeatother

\usepackage{graphicx}
\usepackage{cancel}

\usepackage{enumerate}
\usepackage{scalerel,stackengine}
\usepackage{graphics}
\usepackage{fancyhdr}
\usepackage{cancel}
\usepackage{enumerate}
\usepackage{amssymb}
\usepackage{amsmath}
\usepackage{hyperref}
\usepackage{mdwlist}
\usepackage{etoolbox}
\usepackage{latexsym}
\usepackage{amsthm}
\usepackage{multicol}
\usepackage{makeidx}
\usepackage{bm}
\usepackage{dsfont}
\usepackage{graphicx}
\usepackage{wrapfig}
\usepackage{multicol}
\usepackage{makeidx}
\usepackage{bm}
\usepackage{dsfont}
\usepackage{mdframed,color}
\usepackage{amsmath,amssymb}
\usepackage[dvipsnames]{xcolor}
\usepackage{mathtools}

\usepackage[capitalize]{cleveref}
\usepackage{bbm, dsfont}
\usepackage{tikz}
\usetikzlibrary{matrix}
\usetikzlibrary{positioning}

\usepackage{titlesec}
\titleformat{\section}
  {\large\center\bfseries}
  {\thesection.}{.7em}{}
\titlespacing*{\section}{0pt}{3.5ex plus 0ex minus 0ex}{1.5ex plus 0ex}
\titleformat{\subsection}
  {\center\bfseries}
  {\thesubsection.}{.7em}{}
\titlespacing*{\subsection}{0pt}{3.5ex plus 0ex minus 0ex}{1.5ex plus 0ex}
\titleformat{\subsubsection}
  {\center\bfseries}
  {\thesubsubsection.}{.7em}{}
\titlespacing*{\subsubsection}{0pt}{3.5ex plus 0ex minus 0ex}{1.5ex plus 0ex}

\addto\captionsenglish{}

\usepackage{titling}
\newtheoremstyle{plain}{3mm}{3mm}{\slshape}{}{\bfseries}{.}{.5em}{}
\newtheoremstyle{definition}{2mm}{2mm}{}{}{\bfseries}{.}{.5em}{}
\theoremstyle{plain} 
	
\newtheorem{theorem}{Theorem}[section]

\newtheorem{proposition}[theorem]{Proposition}
\newtheorem{question}[theorem]{Question}
\newtheorem{conjecture}[theorem]{Conjecture}
\newtheorem{lemma}[theorem]{Lemma}
\newtheorem{corollary}[theorem]{Corollary}
\theoremstyle{definition} 
\newtheorem{definition}[theorem]{Definition}
\newtheorem{remark}[theorem]{Remark}

\theoremstyle{plain} 
\newcounter{MainTheoremCounter}

\newtheorem{maintheorem}[MainTheoremCounter]{Theorem}
\newtheorem{maincorollary}[MainTheoremCounter]{Corollary}
\theoremstyle{plain}
\newtheorem*{namedthm}{\namedthmname}
\newcounter{namedthm}
	
\newcommand{\N}{\mathbb{N}}
\newcommand{\Z}{\mathbb{Z}}

\newcommand{\R}{\mathbb{R}}

\renewcommand{\P}{\mathbb{P}}

\newcommand{\bP}{\mathbb{P}}

\DeclareMathOperator\supp{supp}

\usepackage[normalem]{ulem}

\newcommand{\E}{\operatorname{\mathbb{E}}}

\begin{document}
\author{By~~{\scshape Felipe~Hern\'andez}~~and~~{\scshape Luke~Hetzel}}
\date{\small \today}
\title{{\bfseries On growth rates of infinite and finite sumsets}}
\maketitle

\begin{abstract}
We study growth rates of infinite and finite sumset patterns in sets of positive density. In the infinite setting, we show that no such rate exists, answering a question of Kra, Moreira, Ritcher, and Robertson. Namely, for any proposed growth rate $\mathcal{H}: \N \to \N$ tending to infinity, we construct a set $A$ of lower density \(1\) such that whenever $B,C \subseteq \N$ are infinite and $B+C \subseteq A$ we have the minimum of $|B\cap [N]|$ and $|C \cap [N]|$ is less than $\mathcal{H}(N)$ for infinitely many $N$. In the finitary setting, we prove that for all $\delta \in (0,1)$, for all sufficiently large $N$, for all subsets $A$ of \(\{1,\dots,N\}\) of proportion $\delta$, one can always find sumset patterns \(B+C\subseteq A\) with \(|B|\) and \(|C|\) of order \(\log N\), partially resolving a conjecture of Kra, Moreira, Richter, and Robertson. Moreover, we generalize our second result to the case of the $k$-fold sum $B_1 + B_2 + \ldots + B_k \subseteq A$.
\end{abstract}

\section{Introduction}
One of the most famous and celebrated results in Ramsey theory is van der Waerden's theorem \cite{vdWaerden27}: for any integers $r, k$ there exists $N \in \N$ such that if $[N]$ is colored with $r$ colors, then some color class contains an arithmetic progression of length $k$. A limitation of this theorem is that one cannot specify in advance which color must contain these progressions. In 1953 Roth \cite{Roth53} showed that for every $\delta \in (0,1]$, there exists $N \in \N$ such that for every $ A \subseteq [N]$ with $|A| \geq \delta N$, the set $A$ contains an arithmetic progression of length $3$. In 1975 Szemer\'edi \cite{Szemeredi75} proved that for every $k\in \N$ and $\delta \in (0,1]$, there exists $N \in \N$ such that for every $ A \subseteq [N]$ with $|A| \geq \delta N$, the set $A$ contains an arithmetic progression of length $k$, resolving a conjecture of Erd\H{o}s and Tur\'an \cite{Erdos_Turan_seq_intergers:1936}.

A direct application of the compactness principle shows that the results of van der Waerden and Szemer\'edi both have equivalent infinitary formulations. On one hand, equivalent to van der Warden's theorem is the statement that in every finite coloring of $\N$ at least one color contains arbitrarily large arithmetic progressions. On the other hand, Szemer\'edi's theorem is equivalent to the statement that every subset of the $\N$ with positive upper Banach density (see \cref{notation} for the definition) contains arbitrarily large arithmetic progressions.    
 
In 2019, Moreira, Richter, and Robertson \cite{Moreira_Richter_Robertson19} showed that every set of positive upper Banach density contains a set of the form $B+C$ for two infinite sets $B$ and $C$, proving a long-standing conjecture of Erd\H{o}s. Subsequent works have established several generalizations, but since we will mainly focus on configurations of the form $B+C$ in this article, we refer the reader to 
\cite{Charamaras_Mountakis_2025,FH25,HKR25,host2019short,Kousek_2026,Kra_Moreira_Richter_Robertson:2022,Kra_Moreira_Richter_Robertson:2023,kmrr25} for further developments.

One may wonder whether it is possible to say something about the size of $B$ and $C$ in the theorem of Moreira, Richter, and Robertson. In this direction, Host \cite{host2019short} showed that there exists a set of positive density not containing any sum of a set of positive density and an infinite set. However, one can still ask whether the theorem of Moreira, Richter, and Robertson admits an equivalent finitary formulation, as do the theorems of van der Waerden and Szemerédi. Since positive density corresponds to linear growth, one might instead ask whether $B$ and $C$ can be forced to grow at some prescribed sublinear rate. In their survey paper, Kra, Moreira, Richter, and Robertson \cite{Kra_Moreira_Richter_Robertson_problems} pose the following question.

\begin{question}[{\cite[Question 4.9]{Kra_Moreira_Richter_Robertson_problems}}]\label{Infinite_density_question}
    Given $\delta \in (0,1)$, is there a nondecreasing function $\mathcal{H}: \N \to \N$ with $\mathcal{H}(N) \to \infty$ as $N \to \infty$ satisfying the following: for any set $A \subseteq \N$ with $\underline{d}(A) \geq \delta$ there exist $B,C \subseteq \N$ with
        \[
        \min \{ |B \cap [N]|, |C \cap [N]| \} \geq \mathcal{H}(N)
        \]
    for all sufficiently large $N$ and such that $B+C \subseteq A$?
\end{question}
The authors in \cite{Kra_Moreira_Richter_Robertson_problems} remark that not even the coloring analog of \cref{Infinite_density_question} has been solved. Our first result answers both, \cref{Infinite_density_question} and its natural coloring analog.
\begin{maintheorem}\label{No-growth-rate-for-infinite-sumsets}
    Let $\mathcal{H} : \N \to \N$ be a non-decreasing function with $\mathcal{H} (N) \to \infty $ as $N \to \infty$. 
    \begin{enumerate}
    \item\label{Item-1-Theo-A} There is a $2$-coloring of $\N$ such that for all two infinite sets $B,C\subseteq \N$ with $B+C$ monochromatic we have
    $$\max\{|B\cap [N]|, |C\cap [N]|\} < \mathcal{H}(N) $$
    for infinitely many $N\in \N$.
    \item\label{Item-2-Theo-A} There is a set $A \subseteq \N$ with $\underline{d}(A) = 1$ such for all two infinite sets $B,C \subseteq \N$ with $B+C \subseteq A$ we have 
        \[
         \min{ \{|B \cap [N]|, |C \cap [N]| \}}<\mathcal{H}(N)
        \]
        for infinitely many $N\in \N$.
    \end{enumerate}
 
\end{maintheorem}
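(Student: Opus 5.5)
The plan is to prove \cref{Item-2-Theo-A} by a block construction: $\N$ is cut into rapidly growing blocks, and each block loses a structured set of small relative density. Item \cref{Item-1-Theo-A} should then follow from a variant of the same construction, where the removed set $R$ becomes one colour class and $A=\N\setminus R$ the other. Fix scales $M_1<M_2<\dots$ and moduli $q_1<q_2<\dots$ with $q_k\to\infty$ and $M_{k+1}/M_k\to\infty$. On the block $I_k=[M_k,M_{k+1})$, delete from $\N$ every $n\in I_k$ lying in a prescribed set $E_k$ of residues modulo $q_k$, with $|E_k|/q_k\to 0$. Lower density $1$ is then easy. Any window $[1,N]$ meets only the blocks up to the current one, and each of these loses a proportion $O(|E_j|/q_j)+O(q_j/M_j)$. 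So letting $|E_k|/q_k\to0$ and $q_k=o(M_k)$ suffices.

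The point of $\mathcal{H}$ is to choose, for each $k$, a checkpoint $N_k$ with $\mathcal{H}(N_k)$ much larger than every quantity attached to the first $k$ blocks. This is possible because $\mathcal{H}\to\infty$ with no rate. Then we insert the next block only after $N_k$, i.e.\ $M_{k+1}>2N_k$. Now suppose $B+C\subseteq A$ and that $|B\cap[N_k]|,|C\cap[N_k]|\ge\mathcal{H}(N_k)$ for all large $k$; we derive a contradiction. Fix $k$ and let $S=B\cap[N_k]$ and $T=C\cap[N_k]$, both of size $\ge\mathcal{H}(N_k)$. Since $B$ and $C$ are infinite, pick $c\in C$ and $b\in B$ deep inside a later block $I_m$, far from its endpoints. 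Then $c+S$ and $b+T$ lie in $I_m$, so they must avoid $E_m$ modulo $q_m$. The same holds for many pairs $(b,c)$ with $b,c$ in blocks $\ge m$, and for sums $b+c$ that land in a single block.

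The aim is to choose the residue sets $E_m$ so that no set of size $\mathcal{H}(N_k)$ whose translates sit in $I_m$ can have all its translates avoid $E_m$. For example, $E_m$ could be a union of residues forming a "hitting set" for all translates of large subsets of $\Z/q_m\Z$. A concrete try is to take $q_m$ prime and $E_m$ a short interval of residues. Then we use the fact that a set $S$ of $h$ integers in $[N_k]$ with $N_k<q_m$ has $h$ distinct residues. Infinitely many translates $c+S$, with $c$ varying over $C\cap I_m$, avoid the interval only if $C$ is confined modulo $q_m$ to a set of measure about $1-h|E_m|/q_m$. The same confinement holds for $B$. Playing the residue constraints for successive $m$ against each other then shows that the differences of elements of $B$ (or of $C$) are divisible by arbitrarily many moduli, which forces a contradiction.

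The main obstacle is the last step. With density-$o(1)$ removals, a single translate $c+S$ can always dodge $E_m$, so the contradiction must come from combining infinitely many translates $c\in C$ with infinitely many $b\in B$. The calibration also has to work: $|E_m|/q_m\to0$ while $\mathcal{H}(N_k)\,|E_m|/q_m$ stays large. I would first try to make $E_m$ an arithmetic structure (several residues modulo a product of primes) so that "all of $c+S$ avoids $E_m$" forces a rigid congruence on $c$. Iterating over $k$ should then pin $C$ to a single residue class modulo arbitrarily large moduli, which is impossible for an infinite set.
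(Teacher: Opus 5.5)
There is a genuine gap, and it is the step you flag as the main obstacle. The mechanism you propose for it cannot work.

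\textbf{Why the residue approach fails.} The congruence condition in $I_m$ only sees elements whose sums land in $I_m$. So any pinning it produces relates elements of $C\cap I_m$ (or $B\cap I_m$) to each other modulo the single modulus $q_m$. An infinite $C$ with $C\cap I_m$ inside one residue class mod $q_m$ for every $m$ is perfectly possible: different moduli act on disjoint pieces of $C$, and $q_m=o(M_m)$ leaves room for many elements of one class. So no ``differences divisible by arbitrarily many moduli'' contradiction follows.

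Nor is there real confinement to begin with. For $E_m$ an interval of residues, the residues forbidden to $c$ form $\bigcup_{s\in S}(E_m-s)$. This lies in an interval of length at most $|E_m|+\ell$ whenever $S$ lies in an interval of length $\ell$. An initial segment of $B$ made of a few tight clusters therefore leaves $C$ essentially unconstrained.

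Your ``hitting set'' variant is impossible outright. Suppose $N_k<q_m$ and $E_m$ meets every translate of every $h$-subset of $[N_k]$. Then every window of length $N_k$ contains at least $N_k-h+1$ points of $E_m$, so $E_m$ has density at least $1-(h-1)/N_k$.

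\textbf{The missing idea.} You need to forbid the candidate patterns \emph{one at a time}. \cref{Forbidding-finite-patterns} says that for each $\epsilon>0$ there is $K$ such that any single $P$ with $|P|\ge K$ has no translate at all inside some set of Schnirelmann density at least $1-\epsilon$ in $\Z_N$, with $N$ arbitrarily large. This is where the real work lies. Ramsey's theorem for triples (\cref{partition}, \cref{gaps_exist}) extracts a subpattern with geometrically growing gaps, and \cref{Forbidding-increasing-gaps} avoids such a subpattern by a nested periodic construction.

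Let $I(n)=\min\{r:\mathcal{H}(r)>n\}$. Stage $\ell$ concatenates one such block, of length larger than $I(\cdot)$ and density at least $1-1/(\ell+1)$, for \emph{every} $P\subseteq[0,|W_{\ell-1}|)$ of size $\mathcal{H}(|W_{\ell-1}|)$. Choose $P$ among the elements of $B$ below $|W_{\ell-1}|$. This gives a window $\{k,\dots,k+I(k)\}$ in which $A$ contains no translate of $P$. Since no $c\in C$ can place $c+P$ inside that window, one gets $|C\cap[0,I(k))|\le k<\mathcal{H}(I(k))$.

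Your observation that a single translate can always dodge the removed set comes from asking one removal set to handle all patterns at once. A removal of density $\epsilon$ does block every translate of one fixed pattern of size at least $K(\epsilon)$.

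\textbf{Part (1).} This does not follow from your construction as described. Part (1) asks for $\max$, so with colour classes $R$ and $A=\N\setminus R$ you must show that whenever $B+C\subseteq A$, \emph{both} $B$ and $C$ are sparse infinitely often. The negation only says that at each large scale one of them is large. So a fast-growing $B$ paired with a very sparse $C$ must be excluded, and your argument, which assumes both $|B\cap[N_k]|$ and $|C\cap[N_k]|$ are at least $\mathcal{H}(N_k)$, does not touch this. For a set of lower density one this is the $\max$-version of part (2), which the paper explicitly says it does not know how to prove. You would also need to treat $B+C\subseteq R$.

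The paper proves part (1) by a separate, elementary construction: alternating monochromatic runs, where the run appended to $w_i$ has length $2(I(|w_i|)+|w_i|)+1$. If $B+C$ lies in colour $0$ and $[|w_i|,|w_{i+1}|)$ is a run of colour $1$, a pigeonhole argument bounds both $|B\cap[|w_{i+1}|]|$ and $|C\cap[|w_{i+1}|]|$ by $|w_i|$. Meanwhile $\mathcal{H}(|w_{i+1}|)\ge\mathcal{H}(I(|w_i|))>|w_i|$. Both colours there have lower density zero. That is why the argument does not transfer to part (2), and why part (2) needs the pattern-forbidding machinery.
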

\begin{remark}
   We are not sure if we can replace $\min$ with $\max$ in (\ref{Item-2-Theo-A}), and if we can replace ``infinitely many $N\in \N$''  with ``cofinite $N\in \N$'' in (\ref{Item-1-Theo-A}) or (\ref{Item-2-Theo-A}) (see \cref{q-1} and \cref{q-2}).
\end{remark}

\cref{No-growth-rate-for-infinite-sumsets} shows that no growth rate can be found for infinite sumsets. However, a similar question can be asked for finite sumsets. For $\delta \in (0,1)$ and $N\in \N$, we define $\phi_{\delta}$ as the largest integer such that any set $A \subseteq [N]$ with $|A| \geq \delta N$ contains a set of the form $B+C$ where $B,C \subseteq [N]$ satisfy $\min\{|B|,|C|\} \geq \phi_{\delta}(N)$. This function captures the asymptotic behavior of the growth rate of finite sumsets that can be found in subsets of $[N]$ with density at least $\delta$. In \cite{Kra_Moreira_Richter_Robertson_problems}, Kra, Moreira, Richter and Robertson made the following conjecture.

\begin{conjecture}[{\cite[Conjecture 4.10]{Kra_Moreira_Richter_Robertson_problems}}]\label{finite-growth-rate-conj}
    Let $\delta \in (0,1).$ Then
    \begin{equation}\label{conj-item-1} 
        0<\liminf_{N \to \infty} \frac{\phi_{\delta}(N)}{\log(N)},
    \end{equation}
    and
    \begin{equation}\label{conj-item-2} 
      \limsup_{N \to \infty} \frac{\phi_{\delta}(N)}{\log(N)} < \infty.  
    \end{equation}
\end{conjecture}
The lower bound over the liminf would show that $\log(N)$ is a growth rate for finite sumsets. Meanwhile, the upper bound over the limsup would show that such a growth rate is ``optimal.'' 
\begin{remark}
  In this work, we focus only on \cref{conj-item-1}, as \cref{conj-item-2} seems out of reach. However, it is worth mentioning that using \cite[Theorem 2]{KONYAGIN201861}, one can easily deduce that for each $\delta \in [0,1/2]$, and for all $\epsilon>0$
  \begin{equation}\label{eq-limsup-modified}
      \limsup_{N \to \infty} \frac{\phi_{\delta}(N)}{\log(N) \log\log(N)^{10+\epsilon}}=0.
  \end{equation}
To our knowledge, this the closest result to \cref{conj-item-2}. 
\end{remark}

The use of the intersectivity lemma of Bergelson \cite{MR809951} has been crucial for finding infinite sumsets $B+C$ inside of sets of positive upper Banach density in multiple previous works (see for example \cite{diNasso_Golbring_Jin_Leth_Lupini_Mahlburg2015sumset,host2019short,Moreira_Richter_Robertson19}). Following such previous developments, in order to prove \cref{conj-item-1} we develop a finitistic version of the intersectivity lemma, which we state next.
\begin{maintheorem}\label{Finitistic-Intersectivity-Lemma}
      Let $\delta\in (0,1)$ and $0<\sigma<1/\log{(\delta^{-1})}$. Let $\alpha>0$ and let $M=M(N) =\omega(N^\alpha)$ for $N\in \N$. Then, there is $\overline{N}\in \N$ such that for all $N\geq \overline{N}$, the following holds: for each $m\in [M]$ let $A_{m}\subseteq [N]$ satisfying $|A_{m}|\geq \delta N$. Then there is $B\subseteq [M] $ with $|B|\geq \sigma \log{N}$ such that 
        $$\left| \bigcap_{m\in B} A_{m} \right| \geq N^{1-\sigma \log(\delta^{-1})}.$$
\end{maintheorem}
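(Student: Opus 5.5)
The plan is a first-moment (double counting) argument over $k$-element subsets of $[M]$, with $k=\lceil \sigma\log N\rceil$.

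For $x\in[N]$ let $d(x)=|\{m\in[M]: x\in A_m\}|$. Since each $|A_m|\ge \delta N$, we have
\begin{equation*}
\sum_{x\in[N]} d(x)=\sum_{m\in[M]}|A_m|\ge \delta NM .
\end{equation*}
Next count pairs $(x,B)$ with $B\in\binom{[M]}{k}$ and $x\in\bigcap_{m\in B}A_m$. On the one hand this count equals $\sum_{B}\bigl|\bigcap_{m\in B}A_m\bigr|$. On the other hand it equals $\sum_x\binom{d(x)}{k}$. The function $t\mapsto\binom{t}{k}$ (extended as a convex function, zero for $t<k$) is convex, so Jensen gives
\begin{equation*}
\sum_x\binom{d(x)}{k}\ge N\binom{\delta M}{k}.
\end{equation*}
Averaging over the $\binom{M}{k}$ choices of $B$, some $B$ of size $k\ge\sigma\log N$ satisfies
\begin{equation*}
\Bigl|\bigcap_{m\in B}A_m\Bigr|\ge N\,\frac{\binom{\delta M}{k}}{\binom{M}{k}}=N\prod_{i=0}^{k-1}\frac{\delta M-i}{M-i}\ge N\,\delta^{k}\Bigl(1-\frac{k}{\delta M}\Bigr)^{k}.
\end{equation*}

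The correction factor is harmless. Since $M=\omega(N^\alpha)$ and $k=O(\log N)$, we get $k^2/(\delta M)\to0$, so $(1-k/(\delta M))^k=1-o(1)$. Also $\delta^{\sigma\log N}=N^{-\sigma\log(\delta^{-1})}$. Hence the chosen intersection has size at least $(1-o(1))\,\delta^{\,k-\sigma\log N}\,N^{1-\sigma\log(\delta^{-1})}$. The hypothesis $\sigma<1/\log(\delta^{-1})$ is exactly what makes the exponent $1-\sigma\log(\delta^{-1})$ positive, so the bound is a genuinely growing power of $N$. This positivity is also what one needs downstream, for example to iterate the argument or to guarantee that the intersection is nonempty.

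The step I expect to be the main obstacle is removing the residual constant factor $\delta^{\,\lceil\sigma\log N\rceil-\sigma\log N}\in(\delta,1]$, which comes from the integrality of $k$, together with the $1-o(1)$ loss, so as to reach the clean bound $N^{1-\sigma\log(\delta^{-1})}$. My first attempt would be to exploit strict inequalities elsewhere. If $d(x)$ is not concentrated at $\delta M$, then Jensen is strict by a factor, since $\binom{t}{k}$ is very convex for large $k$. If it is concentrated, then the sets are forced to look quasi-random, and one can choose $B$ greedily, one index at a time, picking at each step an $m$ that maximizes $|A_m\cap\bigcap_{B'}A_{m'}|$. A greedy step loses a factor $\delta$ only on average, and can do better once the current intersection is small compared with $M$. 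If this cannot be made to beat the constant, I would fall back on running the computation with $k=\lfloor\sigma'\log N\rfloor$ for a slightly adjusted $\sigma'$ and absorbing the constant into the sufficiently-large-$N$ threshold $\overline N$. Checking whether the exponent in the statement leaves room for that absorption is the delicate point I would verify first.
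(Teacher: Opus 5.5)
Your argument is the paper's argument: a first moment over a uniformly random $k$-subset of $[M]$ with $k=\lceil\sigma\log N\rceil$, plus convexity of the extended binomial. Your accounting of its output, $(1-o(1))\,\delta^{\,k-\sigma\log N}N^{1-\sigma\log(\delta^{-1})}$, is correct. The step you flag is a real gap, and the paper does not close it either. Its last line asserts $N\binom{M}{\sigma\log N}^{-1}\binom{\delta M}{\sigma\log N}\ge N\delta^{\sigma\log N}$ for large $N$. But every factor $(\delta M-i)/(M-i)$ is at most $\delta$, strictly for $i\ge 1$, and $k\ge\sigma\log N$. So the left side is strictly below $N\delta^{\sigma\log N}$ as soon as $k\ge 2$. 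Your fallback of absorbing the constant into $\overline N$ also cannot work. The same $\sigma$ governs both the constraint $|B|\ge\sigma\log N$ and the exponent, so for every admissible $k$ the first-moment bound $N\binom{\delta M}{k}/\binom{M}{k}$ is already below the target, with no polynomial slack to absorb anything. This is why the paper's Theorem~\ref{FIL} carries an auxiliary $\tilde\delta<\delta$.

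Your other remedies (strict convexity, or a greedy choice of $B$) cannot succeed either, because the bound as literally stated is false. Take $M=N$ (admissible with $\alpha=1/2$) and set $\beta=1-\sigma\log(\delta^{-1})>0$. Let each $A_m$ contain each $x\in[N]$ independently with probability $p=\delta+N^{-1/3}$. Chernoff and a union bound give $|A_m|\ge\delta N$ for all $m$ with high probability. For a fixed $B$ with $|B|=k$, $|\bigcap_{m\in B}A_m|\sim\mathrm{Bin}(N,p^k)$ with mean $Np^k=(1+o(1))N\delta^k\ge\delta N^{\beta}$. It therefore exceeds $(1+\eta)Np^k$ with probability at most $e^{-\eta^2\delta N^{\beta}/3}$, while there are only $\binom{N}{k}=e^{O(\log^2 N)}$ such $B$. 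Hence with high probability every $B$ with $|B|\ge\sigma\log N$ (which contains a $k$-subset) satisfies $|\bigcap_{m\in B}A_m|\le(1+\eta)(1+o(1))\,\delta^{\,k-\sigma\log N}N^{\beta}$. Choose $\eta$ with $(1+\eta)\delta^{1/4}<1$. Then this is below $N^{\beta}$ for the infinitely many $N$ with $\lceil\sigma\log N\rceil-\sigma\log N\ge 1/4$. In this family all $k$-wise intersections are within a factor $1+o(1)$ of $N\delta^k$, so no selection rule gains a constant.

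What your computation does prove is the statement with a constant loss, e.g.\ $|\bigcap_{m\in B}A_m|\ge\frac{\delta}{2}N^{1-\sigma\log(\delta^{-1})}$ for large $N$. Alternatively it proves the $k=1$ case of Theorem~\ref{FIL}: for $\tilde\delta<\delta$ with $\sigma<1/\log(\tilde\delta^{-1})$ one gets $N^{1-\sigma\log(\tilde\delta^{-1})}$. There absorption does work, because $(\delta/\tilde\delta)^{\sigma\log N}\to\infty$. Either version is all that Corollary~\ref{Finite-Sumsets-Growth-rate} needs.
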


As a corollary, our next result proves the first part of \cref{finite-growth-rate-conj}. 

\begin{maincorollary}\label{Finite-Sumsets-Growth-rate}
    For every $\delta \in (0,1)$ we have 
    $$\frac{1}{\log{\frac{1}{\delta}}}\leq \liminf_{N\to \infty} \frac{\phi_{\delta}(N)}{\log{N}}.$$
\end{maincorollary}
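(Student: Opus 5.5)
We deduce the corollary from \cref{Finitistic-Intersectivity-Lemma}, using translates of $A$ as the family $A_m$. Fix $\delta\in(0,1)$ and any $\sigma$ with $0<\sigma<1/\log(\delta^{-1})$. It suffices to show that $\phi_\delta(N)\ge \sigma\log N - O(1)$ for all large $N$: letting $\sigma\uparrow 1/\log(\delta^{-1})$ then gives the claimed bound on the $\liminf$.

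Let $A\subseteq[N]$ with $|A|\ge\delta N$. The idea is to take shifts $A-m$ for $m$ in a range of size $M$. Pick a small $\eta>0$ and set $M=\lfloor \eta N\rfloor$. This is certainly $\omega(N^{1/2})$, so we may take $\alpha=1/2$ in \cref{Finitistic-Intersectivity-Lemma}.

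For $m\in[M]$, put $A_m=(A-m)\cap[N]$, i.e.\ the set of $x\in[N]$ with $x+m\in A$. Since at most $m\le M$ elements of $A$ are lost, we have $|A_m|\ge |A|-M\ge(\delta-\eta)N$. Write $\delta'=\delta-\eta$. Because the condition on $\sigma$ is a strict inequality, we may choose $\eta$ small enough that $\sigma<1/\log(\delta'^{-1})$ still holds.

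Apply \cref{Finitistic-Intersectivity-Lemma} with $\delta'$, $\sigma$ and $\alpha=1/2$. For $N\ge\overline N$ it yields $B\subseteq[M]$ with $|B|\ge\sigma\log N$ and
$$\Bigl|\bigcap_{m\in B}A_m\Bigr|\ge N^{1-\sigma\log(\delta'^{-1})}.$$
Let $C=\bigcap_{m\in B}A_m\subseteq[N]$. By construction $c+b\in A$ for every $c\in C$ and $b\in B$, so $B+C\subseteq A$, and $B\subseteq[M]\subseteq[N]$.

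It remains to bound $|C|$. Since $1-\sigma\log(\delta'^{-1})>0$, we get $|C|\ge N^{c}$ for some $c>0$, which far exceeds $\sigma\log N$ once $N$ is large. Hence $\min\{|B|,|C|\}\ge\sigma\log N$. This gives $\phi_\delta(N)\ge\lceil\sigma\log N\rceil$ for large $N$, and therefore $\liminf_N\phi_\delta(N)/\log N\ge\sigma$.

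The only delicate point is the density loss from truncating the shifts, since it changes $\delta$ to $\delta-\eta$. This is handled by the strict inequality on $\sigma$ and the freedom to choose $\eta$. An alternative that avoids any loss is to work cyclically in $\Z/N\Z$, but that would require mapping back to genuine integer sums, so the truncation approach above is preferable. All the real difficulty lies in \cref{Finitistic-Intersectivity-Lemma} itself, which we take as given.
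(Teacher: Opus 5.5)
Your proposal is correct and follows essentially the same route as the paper, which obtains the corollary as the case $k=2$ of \cref{theorem-B-general-form}. There, too, the intersectivity lemma is applied to the truncated shifts $A_m=(A-m)\cap[N]$, $m\in[M]$, and the second summand is $C=\bigcap_{m\in B}A_m$. The only difference is cosmetic: the paper takes $M=N^{1-\epsilon}$, so the truncation loss is $o(N)$ and is absorbed by auxiliary densities $\delta'<\tilde\delta<\delta$, whereas you take $M=\lfloor\eta N\rfloor$ and absorb the linear loss $\eta N$ using the strictness of $\sigma<1/\log(\delta^{-1})$; both work.
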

We remark that we actually obtain a more general version of \cref{Finitistic-Intersectivity-Lemma} and \cref{Finite-Sumsets-Growth-rate} related to the growth rate for $k$-sumsets $B_1+\cdots+B_k$. For the detailed statement of the general result, we refer the reader to \cref{sec3}. 

\subsection*{Acknowledgments}
We are grateful to Paul Horn, Anh Le, Ronnie Pavlov, and Florian Richter for various helpful comments regarding a previous draft of this article.

\section{Notation}\label{notation}
We write $\N = \{1,2,\ldots \}$ as the set of naturals and $\N_0 = \{0,1,2,\ldots \}$ to be the set of non-negative integers. For $N\in \N$ and $M \in \N_0$ we denote $[N]:=\{1,\ldots,N\}$ and $[M,N)=\{M,M+1,\ldots,N-1\}$. 

\begin{definition}For a set $A \subseteq \N$
 define the \emph{lower natural density} and \emph{upper natural density} of $A$ by $$\underline{d}(A):=\liminf_{N \to \infty} \frac{|A \cap [N]|}{N} \text{ and }\overline{d}(A):=\limsup_{N \to \infty} \frac{|A \cap [N]|}{N}$$ respectively. Analogously, we define the \emph{upper Banach density} by $$d^*(A):=\limsup_{N-M \to \infty} |A \cap [M,N)|/(N-M).$$
\end{definition}

In order to construct the set $A$ in \cref{No-growth-rate-for-infinite-sumsets} (\ref{Item-2-Theo-A}) we make use of the notion of \textit{Schnirelmann density} which is usually defined in infinite sets, but we extend its definition to finite sets in the following way.  
\begin{definition}
    Let $N\in \N$. We define the \textit{Schnirelmann density} of a set $A\subseteq [0,N)$ as 
    $$\sigma_N(A)= \min_{k\leq N} \frac{|A\cap [0,k)|}{k}. $$
\end{definition}

\begin{definition}
Let $w \in \{0,1\}^{[0,n)}$ be a finite word, we define $|w| = n$ to be the length of $w$. Additionally for $i \in \N$ we define $w^i = ww\ldots w$ to be the concatenation of $w$ with itself $i$ times. Finally we define $\supp (w) = \{ i \in [0,n): w_i = 1 \}$
\end{definition}

\begin{definition}
    For eventually positive functions $f,g:\N\to \R$, we say $f(n)=\omega(g(n))$ if $f(n)/g(n)\to \infty$ as $n\to\infty$. For functions $f,g:\N\to \R$ such that $g(n)\neq 0$ for $n$ sufficiently large , we say $f(n)\thicksim g(n)$ if $f(n)/g(n)\to 1$ as $n\to \infty$. 
\end{definition} 

\begin{definition} Let $N \in \N$ and $\alpha \in \R$ with $\alpha \leq N$. Define $\binom{[N]}{\alpha}$ to be the set of all subsets of $[N]$ of cardinality $\lceil \alpha \rceil$.
\end{definition}

\section{Growth Rates For Infinite Sumsets}
\subsection{Growth rate problem for colorings}
In this section, we prove \cref{No-growth-rate-for-infinite-sumsets}. We first show that for every possible growth rate, $\mathcal{H}$, there is a 2-coloring of the natural numbers such that every infinite sumset fails to follow $\mathcal{H}$ infinitely many times. This example contains in embryonic form the ideas that we later use to construct the analogous counterexample for the density version of the problem. 

Since part of our proof of \cref{No-growth-rate-for-infinite-sumsets} (\ref{Item-2-Theo-A}) will involve projecting into $\Z_N$ it becomes notationally simpler to work in $\N_0$ instead of $\N$. In order to keep notation consistent between the coloring and density problem, we similarly adapt the proof of \cref{No-growth-rate-for-infinite-sumsets} (\ref{Item-1-Theo-A}). Since \cref{No-growth-rate-for-infinite-sumsets} is equivalent in either $\N$ or $\N_0$ by shift invariance, this does not lose any generality.
\begingroup
\def\thetheorem{\ref{No-growth-rate-for-infinite-sumsets} \ref{Item-1-Theo-A}}
\begin{theorem}\label{Coloring-problem}
    For any non-decreasing function $\mathcal{H}:\N\to \N$ with $\mathcal{H}(N) \to \infty$ as $N\to \infty$ there is a $2$-coloring of $\N_0$ such that for all two infinite sets $B,C\subseteq \N$ with $B+C$ monochromatic we have
    \begin{equation}\label{eq-colorings}
       \max\{|B\cap [N]|, |C\cap [N]|\} < \mathcal{H}(N)  
    \end{equation}
    for infinitely many $N$.
\end{theorem}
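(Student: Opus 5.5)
The plan is to use a block coloring with very rapidly growing block lengths. The idea is that an infinite monochromatic sumset $B+C$ forces both $B$ and $C$ to skip almost entirely every block of the "wrong" color. If the blocks grow fast enough relative to $\mathcal{H}$, then just before the end of such a block both $B$ and $C$ have only about as many elements as the total length of all earlier blocks, which is far below $\mathcal{H}(N)$.

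\textbf{Construction.} Choose integers $0=N_0<N_1<N_2<\cdots$ recursively so that
\[
\mathcal{H}\bigl(\lfloor N_{k+1}/2\rfloor\bigr) > N_k \quad\text{and}\quad N_{k+1}\ge 2N_k+2 .
\]
This is possible because $\mathcal{H}(N)\to\infty$. Color $n\in[N_k,N_{k+1})$ with color $k \bmod 2$. Let $E_0$ be the union of the even-indexed blocks and $E_1$ the union of the odd-indexed blocks.

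\textbf{Key step.} Suppose $B,C\subseteq\N$ are infinite and $B+C\subseteq E_j$ for some $j\in\{0,1\}$. Let $b_0=\min B$, $c_0=\min C$ and $s=\max\{b_0,c_0\}$. Then
\[
C\subseteq E_j-b_0 \quad\text{and}\quad B\subseteq E_j-c_0 .
\]
Take any block $[N_k,N_{k+1})$ of the wrong color, meaning $k\not\equiv j \pmod 2$. If $c\in C$ lies in $[N_k,N_{k+1}-b_0)$, then $c+b_0\in[N_k,N_{k+1})\subseteq E_{1-j}$, which is a contradiction. Hence $C\cap[N_k,N_{k+1}-s)=\emptyset$, and by the same argument $B\cap[N_k,N_{k+1}-s)=\emptyset$. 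Now set $N=N_{k+1}-s-1$. Then
\[
\max\{|B\cap[N]|,|C\cap[N]|\}\le N_k .
\]
For $k$ large we have $s\le N_{k+1}/2-1$, so $N\ge \lfloor N_{k+1}/2\rfloor$. Since $\mathcal{H}$ is non-decreasing,
\[
\mathcal{H}(N)\ge\mathcal{H}(\lfloor N_{k+1}/2\rfloor)>N_k .
\]
There are infinitely many $k$ of the wrong parity, so \eqref{eq-colorings} holds for infinitely many $N$.

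\textbf{Main obstacle.} The only delicate point is the boundary effect caused by the shifts $b_0$ and $c_0$. This is handled by using only the minimal elements of $B$ and $C$, and by letting the blocks grow fast enough that both the shift $s$ and the earlier blocks (of total length $N_k$) are negligible compared with $N_{k+1}$. Infinitude of $B$ and $C$ is not otherwise needed. The growth condition $\mathcal{H}(\lfloor N_{k+1}/2\rfloor)>N_k$ is exactly where the hypothesis $\mathcal{H}\to\infty$ is used.
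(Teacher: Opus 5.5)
Your proof is correct and follows the same route as the paper. Both use alternating monochromatic blocks whose lengths are governed by the growth of $\mathcal{H}$: the paper's $I(n)=\min\{r:\mathcal{H}(r)>n\}$ plays the role of your condition $\mathcal{H}(\lfloor N_{k+1}/2\rfloor)>N_k$. Shifting by the minimum of the other set then keeps $B$ and $C$ out of every wrong-colored block, which bounds both counts by the total length of the preceding blocks.

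Your evaluation point $N=N_{k+1}-s-1$, with $s=\max\{\min B,\min C\}$, handles the boundary effect more carefully than the paper does. The paper's pigeonhole step evaluates at the very end of the block, where an element of $C$ shifted by $\min B$ can overshoot the block.
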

\addtocounter{theorem}{-1} 
\endgroup
\begin{proof}
    Assume by contradiction that there is a non-decreasing $\mathcal{H}:\N\to \N$ with $\mathcal{H}(N) \to \infty$ as $N\to \infty$ such that for every $2$-coloring of $\N$ there are two infinite sets $B,C\subseteq \N_0$ such that $B+C$ is monochromatic and for all sufficiently large $N\in \N$ we have 
       $$\max\{|B\cap [N]|, |C\cap [N]|\} \geq \mathcal{H}(N) .$$
    Define the function $I:\N\to \N$ as 
    $$I(n)= \min\{ r\in \N: \mathcal{H}(r)>n\}. $$
       
       We will build a $2$-coloring $\N_0=A_0\sqcup A_1$ that witnesses the failure of $H$ by building the indicator function of the color $A_1$ as the limit of a sequence of finite words $(w_i)_{i\in \N_0}$. To define such sequence, we set $w_1=01$ and for $i\in \N_0$ we define
       $$w_{i+1}= \begin{cases}
           w_i1^{2(I(|w_i|)+|w_i|)+1} & \text{ if }w_i \text{ ends with }0 \\
           w_i0^{2(I(|w_i|)+|w_i|)+1}& \text{ if }w_i \text{ ends with }1
       \end{cases}.  $$ 
     Let $W \in \{0,1\}^{\N_0}$ be the infinite word such that for each $i\in \N$, $W|_{[|w_i|]}=w_i$, i.e. $W$ starts with the subword $w_i$. Define $A_1=\{ i\in \N_0 \mid W(i)=1\} $ and $A_0=\{ i\in \N_0 \mid W(i)=0\}$. We observe that $(A_0,A_1)$ defines a coloring of $\N_0$. Let $B,C\subseteq \N_0$ be infinite sets such that $B+C$ is monochromatic for the coloring $(A_0,A_1)$. Without loss of generality, assume that $B+C\subseteq A_0$, since the case $B+C\subseteq A_1$ is analogous. Let $N'\in \N$ be such that for all $N\geq N'$,  
      $$\max\{|B\cap [N]|, |C\cap[N]|\}> \mathcal{H}(N). $$
      Let $i\in \N$ such that $|w_i|\geq N'$ and such that $w_i$ ends with $0$. As $w_{i+1}$ extends $w_i$ by only adding $1$'s, we have that by the pingeonhole principle, $\max(|B\cap [|w_{i+1}|]|,|C\cap [|w_{i+1}|]|)\leq |w_i|$. Otherwise, if for example $|C\cap [|w_{i+1}|]|>|w_i|$, then $B+C$ must contain an element in the interval $\{|w_i|+1,\ldots,|w_{i+1}|\}$ which is contained in $A_1$, a contradiction.
      
      On the other hand, we have that $$\mathcal{H}(|w_{i+1}|)\geq \mathcal{H}(I(|w_i|))>|w_i|\geq \max(|B\cap [|w_{i+1}|]|,|C\cap [|w_{i+1}|]|). $$
      Since $|w_{i+1|}\geq |w_i|\geq N'$, we contradict the hypothesis over $\mathcal{H}$. 
\end{proof}
\begin{remark}\label{remark-max-min}
    In \cref{eq-colorings} we are able to take maximum instead of minimum. However, due to the construction both colors $A_0$ and $A_1$ have lower density equal to $0$. Hence neither set forms a counterexample to the analogous density problem.
\end{remark}
\subsection{Growth rate problem for sets of positive lower density}
As explained by \cref{remark-max-min}, the example provided by \cref{Coloring-problem} is not good enough to tackle \cref{Infinite_density_question}. We use the required growth rate to force a shift of the beginning of $A_1$ to be included in $A_0$ and force a shift of the beginning of $A_0$ to be included in $A_1$, and continue repeating the process. This says that any monochromatic $B,C$ that are satisfying the growth rate will be forced into the wrong color later on. The main issue is that in order to control the cardinalities of $B$ and $C$, we forbid potentially super-exponentially increasing intervals for each color. This forces each color to have lower density zero.

The idea to fix this will be to forbid specific patterns instead of a full interval. We prove that for any finite sufficiently large finite pattern, we can find a set with high density that does not include the pattern. We can then iteratively forbid all possible patterns that could have appeared in a candidate for $B$ which satisfies the growth rate.  

The key proposition that will allow us to forbid specific patterns is the following. \begin{proposition}\label{Forbidding-finite-patterns}
    For every $\epsilon>0$ there exists $K\in \N$ such that for every finite subset $P\subseteq \N_0$ with $|P|\geq K$ there exist arbitrarily large $N\in \N$ and $A\subseteq [0,N)$ with $\sigma_N(A)\geq 1- \epsilon$ satisfying $A$ does not contain any shift of $P$ in $\Z_N$.
\end{proposition}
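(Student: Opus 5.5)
The plan is to build $A$ as a periodic set whose complement hits every translate of $P$. Two ingredients are needed: a covering argument that produces a sparse hitting set in a fixed cyclic group $\Z_n$, and a rotation argument of ``cycle lemma'' type that upgrades average density to Schnirelmann density.

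\emph{Step 1 (choice of $K$ and a sparse hitting set).} Choose $K$ so that $(1+\ln K)/K<\epsilon/2$. Given $P$ with $|P|\ge K$, fix any $n>2\max P$; then $P$ reduces injectively into $\Z_n$. We want $D\subseteq \Z_n$ with $|D|\le \epsilon n/2$ such that every translate $x+P$ in $\Z_n$ meets $D$, i.e.\ $D-P=\Z_n$. This is a covering problem: cover $\Z_n$ by translates of $-P$. The standard greedy covering bound (Lov\'asz--Stein) gives such a cover using at most $\frac{n}{|P|}(1+\ln|P|)\le \epsilon n/2$ translates. Equivalently, one can take a random set of density $\ln|P|/|P|$ and add one point for each uncovered residue. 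Set $A_n=\Z_n\setminus D$, which contains no translate of $P$ in $\Z_n$.

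\emph{Step 2 (lifting to arbitrarily large $N$).} Take $N$ to be any multiple of $n$, and let $A\subseteq[0,N)$ be the set of $x$ whose residue mod $n$ lies in $A_n$. A translate $t+P$ taken in $\Z_N$ reduces mod $n$ (since $n\mid N$) to the translate $(t \bmod n)+P$ in $\Z_n$. That translate meets $D$, so $t+P\not\subseteq A$. Since $N$ ranges over all multiples of $n$, we obtain arbitrarily large $N$.

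\emph{Step 3 (Schnirelmann density; the main obstacle).} Periodicity gives $|A\cap[0,k)|\ge (1-\epsilon/2)k-O(n)$, which is useless for small $k$. In particular, an element of $D$ at $0$ would already give $\sigma_N(A)=0$. The fix is to replace $D$ by a suitable rotation $D+s$; this is harmless, since a rotation of a hitting set is again a hitting set.

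To choose $s$, put $f=\mathbbm{1}_D-\epsilon$ on $\Z_n$, so that $\sum_{\Z_n}f\le -\epsilon n/2<0$. Let $S_j=\sum_{i<j}f(i)$ for $0\le j\le n$, and pick $j^*$ maximizing $S_j$. Rotate so that position $j^*$ becomes $0$. Then every initial partial sum of the rotated, periodically extended $f$ over $[0,k)$ is $\le 0$:
\begin{itemize}
\item for $k\le n$, this follows from the maximality of $S_{j^*}$, treating wrap-around by adding the negative total sum;
\item for larger $k$, split off full periods, each of which contributes a negative amount.
\end{itemize}
Hence $|(D+s)\cap[0,k)|\le \epsilon k$ for all $k\le N$, so $\sigma_N(A)\ge 1-\epsilon$.

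The only delicate points are the wrap-around bookkeeping in this rotation lemma and checking that the covering bound gives density at most $\epsilon/2$ for every $|P|\ge K$. The latter holds because $(1+\ln x)/x$ is decreasing for $x\ge 1$.
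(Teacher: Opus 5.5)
Your proof is correct, but it takes a genuinely different route from the paper.

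\textbf{The paper's route.} The paper uses the \emph{structure} of $P$. A $3$-uniform Ramsey argument (\cref{partition}, \cref{gaps_exist}) extracts roughly $1/\epsilon$ points of $P$ whose consecutive gaps grow (or shrink) geometrically. Then \cref{Forbidding-increasing-gaps} builds by hand a nested periodic set avoiding that lacunary sub-pattern. It secures Schnirelmann density by always leaving the discarded tail at the end of each period.

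\textbf{Your route.} You use only the \emph{size} of $P$. Every residue of $\Z_n$ lies in exactly $|P|$ translates of $-P$, so the Lov\'asz--Stein (or alteration) bound gives a hitting set of density at most $(1+\ln|P|)/|P|$. Your cycle-lemma rotation then turns this average bound into a Schnirelmann bound. Only your Step~2, the lifting to multiples of the period, is shared with the paper.

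\textbf{What your approach buys.}
\begin{itemize}
\item It is shorter.
\item It needs no auxiliary hypotheses. The paper must separately arrange a minimal-gap condition such as $b_1-b_0\ge M$ for \cref{Forbidding-increasing-gaps}, and \cref{gaps_exist} does not supply this directly.
\item It gives $K=O(\epsilon^{-1}\log\epsilon^{-1})$. This is sharp up to the logarithm: the same double count shows that the complement of any $P$-free set in $\Z_n$ (with $n>\max P$) has density at least $1/|P|$, so $K\ge 1/\epsilon$ is forced. The paper's $K$, by contrast, is a hypergraph Ramsey number evaluated at an argument exponential in $1/\epsilon$.
\end{itemize}

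\textbf{What the paper's approach buys.} It gives structural information yours does not. A lacunary pattern of only $r+1$ points can already be avoided with Schnirelmann density close to $r/(r+1)$, which is essentially optimal for $(r+1)$-point patterns. Your covering bound applied to an $(r+1)$-point set would only give $1-(1+\ln(r+1))/(r+1)$.
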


 Now we assume \cref{Forbidding-finite-patterns} to prove \cref{Item-2-Theo-A} of \cref{No-growth-rate-for-infinite-sumsets}, which we restate now for the convenience of the reader. 
 \begingroup
 \def\thetheorem{\ref{No-growth-rate-for-infinite-sumsets} \ref{Item-2-Theo-A}}
\begin{theorem}
    Let $\mathcal{H} : \N \to \N$ be a non-decreasing function with $\mathcal{H} (N) \to \infty $ as $N \to \infty$. Then there exists a set $A \subseteq \N_0$ with $\underline{d}(A) = 1$ such that if $B,C \subseteq \N_0$ are infinite with $B+C \subseteq A$, then there exist infinitely many $N \in \N$ for which 
        \[
         \min{ \{|B \cap [0,N)|, |C \cap [0,N)| \}}<\mathcal{H}(N).
        \]
\end{theorem}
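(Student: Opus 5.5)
The plan is to run the argument of \cref{Coloring-problem} again, with one change: instead of forbidding whole intervals, we forbid shifts of specific finite patterns, using \cref{Forbidding-finite-patterns}. The structural fact we exploit is the one used there. If $|C\cap[0,T)|\geq \mathcal{H}(T)>S$, then $C$ must have an element in $[S,T)$. Here $B$ supplies the pattern and $C$ supplies a translate of it that lands in a window where the pattern is forbidden; this is why the statement uses $\min$.

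\textbf{Construction.} For $j\in\N$ let $\epsilon_j=2^{-j}$ and let $K_j$ be the constant that \cref{Forbidding-finite-patterns} gives for $\epsilon_j$; we may assume $K_j$ is nondecreasing. Let $\mathcal{P}_j$ be the finite family of all $P\subseteq[0,j)$ with $|P|\geq K_j$. We define $\mathbbm{1}_A$ by concatenation.
\begin{itemize}
\item Outside designated windows, $A$ contains everything.
\item Stage $j$ treats the patterns $P\in\mathcal{P}_j$ one at a time, and gives each its own window $[S,T')$. Here $S$ is the current end of the construction.
\item Choose $T$ with $\mathcal{H}(T)>S+j$; this is possible because $\mathcal{H}\to\infty$.
\item Apply \cref{Forbidding-finite-patterns} to $P$ to obtain $N>j$ and $A_P\subseteq[0,N)$ with $\sigma_N(A_P)\geq 1-\epsilon_j$ such that $A_P$ contains no shift of $P$ in $\Z_N$.
\item On $[S,T')$, with $T'=S+kN\geq T+j$, let $A$ be $k$ consecutive copies of $A_P$; that is, $\mathbbm{1}_A$ restricted to the window is the word $(\mathbbm{1}_{A_P})^k$.
\end{itemize}
Any $x+P\subseteq[S,T')$ reduces modulo $N$ to a shift of $P$ in $\Z_N$. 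Hence no translate of $P$ lying inside the window is contained in $A$.

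\textbf{Density.} Let $x$ lie in a stage-$j$ window starting at $S$. Because the Schnirelmann density bound holds for every prefix, $|A\cap[S,x)|\geq(1-\epsilon_j)(x-S)-O(1)$ up to the last incomplete copy, and the complete copies contribute at least $(1-\epsilon_j)$ times their length. Earlier windows use larger $\epsilon$'s but occupy a bounded initial segment. Combining these with $\epsilon_j\to 0$ gives $|A\cap[0,x)|/x\to 1$, so $\underline{d}(A)=1$.

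This step needs care. The last incomplete copy of $A_P$ has length less than $N$, and $x-S$ may be small compared to earlier windows. The point of using $\sigma_N$ rather than plain density is that the prefix bound covers incomplete copies too. Each window's deficit is at most $\epsilon_j$ times its length, so the total deficit up to $x$ is $\sum \epsilon_{j'}\cdot(\text{length})$. This is $o(x)$, because each stage's window lengths dwarf all previous ones.

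\textbf{Contradiction.} Suppose $B,C$ are infinite, $B+C\subseteq A$, and $\min\{|B\cap[0,N)|,|C\cap[0,N)|\}\geq\mathcal{H}(N)$ for all $N\geq N'$.
\begin{itemize}
\item Since $|B\cap[0,j)|\to\infty$, pick $j\geq N'$ with $|B\cap[0,j)|\geq K_j$. Set $P=B\cap[0,j)\in\mathcal{P}_j$, and let $[S,T')$ be its window, with $T$ as in the construction.
\item Then $|C\cap[0,T)|\geq\mathcal{H}(T)>S+j>S$, so some $c\in C$ lies in $[S,T)$.
\item Hence $c+P\subseteq[S,T+j)\subseteq[S,T')$, and $c+P\subseteq B+C\subseteq A$.
\end{itemize}
This contradicts the choice of $A$ on the window. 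So for infinitely many $N$ the minimum is less than $\mathcal{H}(N)$.

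The main obstacle is the lower-density bookkeeping: the choice of window lengths and the use of Schnirelmann density to control prefixes ending inside a window. The combinatorial contradiction is straightforward once windows satisfy $\mathcal{H}(T)>S+j$.
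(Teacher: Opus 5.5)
\textbf{There is a genuine gap: as set up, your families $\mathcal{P}_j$ are all empty, so the construction never builds a window.} Any admissible constant $K$ in \cref{Forbidding-finite-patterns} for a given $\epsilon$ must satisfy $K\geq 1/\epsilon$. Indeed, suppose $|P|<1/\epsilon$ and $A\subseteq\Z_N$ has $|A|\geq(1-\epsilon)N$, which $\sigma_N(A)\geq 1-\epsilon$ implies. Then $\sum_{t\in\Z_N}|(t+P)\setminus A|\leq |P|\,|\Z_N\setminus A|\leq |P|\epsilon N<N$, so some shift $t+P$ lies entirely in $A$. Hence $K_j\geq 2^j>j$, and no $P\subseteq[0,j)$ has $|P|\geq K_j$.

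For the same reason, your step ``since $|B\cap[0,j)|\to\infty$, pick $j\geq N'$ with $|B\cap[0,j)|\geq K_j$'' can never be carried out, because $|B\cap[0,j)|\leq j<K_j$ always. The resulting set is just $A=\N_0$, which is not a counterexample. A symptom of this is that your contradiction uses only that $B$ is infinite, never the growth hypothesis on $B$. Since $K_j$ is forced to be large and may outgrow any given $\mathcal{H}$, the only way to get a pattern of size $K_j$ inside an initial segment of $B$ is through $|B\cap[0,R)|\geq\mathcal{H}(R)$, with $R$ chosen so that $\mathcal{H}(R)\geq K_j$.

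\textbf{The fix is to decouple the pattern range from the density index, as the paper does.} The paper forbids every $P\subseteq[0,|W_{\ell-1}|)$ of size $\mathcal{H}(|W_{\ell-1}|)$, and takes $P$ from $B$ using the growth assumption. Concretely:
\begin{enumerate}
\item Choose increasing $R_j$ with $\mathcal{H}(R_j)\geq K_j$, and let $\mathcal{P}_j$ consist of the subsets of $[0,R_j)$ of size $K_j$.
\item Replace $j$ by $R_j$ in your conditions, so that $\mathcal{H}(T)>S+R_j$, $N>R_j$ and $T'\geq T+R_j$.
\item In the contradiction, pick $j$ with $R_j\geq N'$ and take $P\subseteq B\cap[0,R_j)$ with $|P|=K_j$. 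This exists because $|B\cap[0,R_j)|\geq\mathcal{H}(R_j)\geq K_j$.
\item Check that $T\geq N'$, so the hypothesis applies at $T$. This holds because $\mathcal{H}$ is non-decreasing and $\mathcal{H}(N')\leq|B\cap[0,N')|\leq N'\leq R_j<\mathcal{H}(T)$.
\end{enumerate}

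With these changes the rest of your argument goes through. The reduction of a translate inside a window to a shift in $\Z_N$ is correct, and so is the step from $|C\cap[0,T)|>S$ to $C\cap[S,T)\neq\varnothing$.

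The density bound also holds, but not for the reason you give. Let $\Lambda_J$ be the total length of stages $1,\dots,J$. Then the deficit up to $x$ is at most $\Lambda_J+\epsilon_{J+1}x$, so $\underline{d}(A)\geq 1-\epsilon_{J+1}$ for every $J$. Your claim that each stage's window lengths dwarf all previous ones is not guaranteed by your construction, and it is not needed.
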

\addtocounter{theorem}{-1} 
\endgroup
\begin{proof}
    Let $\mathcal{H}: \N \to \N$ be non-decreasing with $\mathcal{H}(n) \to \infty $ as $n \to \infty$. Define the function $I:\N\to \N$ as 
    $$I(n)= \min\{ r\in \N: \mathcal{H}(r)>n\}. $$
    Similarly to \cref{Coloring-problem}, we will define a set $A$ of lower density 1 by building its indicator function as concatenation of a sequence of finite words $(w_i)_{i\in \N}$, i.e. $A$ is defined as
    $$A=\{n\in \N_0: W(n)=1\}, $$
    where $W\in \{0,1\}^{\N_0}$ is the unique infinite word such that $W|_{[0,|W_i|)}= W_i$ for each $i\in \N$, where $W_i=w_1\cdots w_i$. These finite words will have the following properties:
    \begin{enumerate}
        \item\label{word-condition-1} (Forbidding patterns) If $l\geq 2$ and $P\subseteq [0,|W_{\ell-1}|)$ with $|P|=\mathcal{H} (|W_{\ell-1}|)$, then there exists $  k\in [|W_{\ell-1}|-1, |W_{\ell}|)  $ such that $k+I(k)\leq |W_{\ell}|-1$ and there is no shift of $P$ in $\supp(W_{\ell})\cap \{k,\ldots, k+I(k)\}$.
        \item\label{word-condition-2} (High density) For every $l\geq 1$, $\sigma_{|w_{\ell}|}(\supp(w_{\ell}))\geq \ell/(\ell+1)$.
    \end{enumerate}
    
    Assume that we have found such sequence $(w_i)_{i\in \N}$, set $A=\bigcup_{i\in \N} \supp(W_i)$. By \cref{word-condition-2} we have that $\underline{d}(A)=1$. For the sake of contradiction, assume that there are infinite sets $B,C\subseteq \N_0$ with $B+C\subseteq A$ such that there is $N'\in \N$ satisfying that for all $N\geq N'$: 
    $$\min\{|B\cap [0,N)|,|C\cap [0,N)|\} \geq \mathcal{H}(N).$$
    Let $\ell\in \N$ such that $|W_{\ell-1}|-1\geq N'$. Take $P\subseteq B\cap [0,|W_{\ell-1}|)$ of cardinality $\mathcal{H}(|W_{\ell-1}|)$. By \cref{word-condition-1} there exists $|W_{\ell-1}|-1\leq k\leq |W_{\ell}|-1$ such that $k+I(k)\leq |W_{\ell}|-1$ and such that no shift of $P$ is present in $\supp(W_{\ell})\cap \{k,\ldots, k+I(k)\}$. Since $P+C\subseteq A$, this implies that $|C\cap [0,I(k))|\leq k$, which in turn implies that
    $$k\geq  \min\{|B\cap [0,I(k))|,|C\cap [0,I(k))|\}\geq  \mathcal{H}(I(k))>k, $$
    a contradiction. This way, such $B$ and $C$ cannot exist.

    Now we prove that such sequence of words $(w_i)_{i\in \N}$ exists. Applying \cref{Forbidding-finite-patterns} with $\epsilon=\frac{1}{2} $ we obtain $K_1\in \N$ such that the conclusion of \cref{Forbidding-finite-patterns} holds. Let $N_1=\max(I(K_1),K_1)$ and define $w_1=1^{N_1}$. Clearly $w_1$ satisfies \cref{word-condition-2}. It also satisfies conditions \cref{word-condition-1} by vacuity. Assume that for $\ell\geq 2$ we have defined $w_{\ell-1}$ satisfying \cref{word-condition-1} and \cref{word-condition-2}. We define $w_{\ell}$ inductively. We start with $w_{\ell}=\varnothing$. For each $P\subseteq [0,|W_{\ell-1}|)$ with cardinality $\mathcal{H}(|W_{\ell-1}|)$ we can use \cref{Forbidding-finite-patterns} to obtain $N>I(|W_{\ell-1}w_{\ell}|)$ and a set $S\subseteq [0,N)$ such that $\sigma_{N}(S)\geq 1- \frac{1}{l+1}$ and $S$ contains no shift of $P$. Setting $v_S\in \{0,1\}^{N}$ such that $S=\supp(v)$, we replace $w_{\ell}$ with $w_{\ell}v_S$. This process finishes in exactly $\binom{|W_{\ell-1}|}{\mathcal{H}(|W_{\ell-1}|)}$ steps. The finite word $w_{\ell}$ obtained satisfies \cref{word-condition-1} by exhaustion of all possible subsets of $[|W_{\ell-1}|]$ with cardinality $\mathcal{H}(|W_{\ell-1}|)$. Moreover, \cref{word-condition-2} is also satisfied as we only concatenate sets with Schnirelmann density greater or equal than $1 - \frac{1}{\ell+1}$. This finishes the proof. 
 \end{proof}

The methods involved in the proof of both the density and coloring versions of \cref{No-growth-rate-for-infinite-sumsets} involve constructing a successive sections of the set $A$ where the growth rate must fail somewhere. This method allows us to build a counterexample to the question, though it is possible there exists a stronger counterexample. We ask the following.

\begin{question}\label{q-1}
    Let $\mathcal{H}:\N \to \N$ be a non-decreasing function with $\mathcal{H}(N) \to \infty$ as $N \to \infty$. Is it true there exists a set $A \subseteq \N$ with $\underline{d}(A) = 1$ such that for every pair of infinite sets $B,C \subseteq \N$ with $B+C \subseteq A$ we have
        \[
        \max \{ |B \cap [N]|, |C \cap [N]| \} < \mathcal{H}(N)
        \]
    for all sufficiently large $N$.
\end{question}
\begin{question}\label{q-2}
    Let $\mathcal{H}:\N \to \N$ be a non-decreasing function with $\mathcal{H}(N) \to \infty$ as $N \to \infty$. Is it true there exists a 2-coloring of $\N$ such that for every pair of infinite sets $B,C \subseteq N$ with $B+C$ monochromatic we have
        \[
        \max \{ |B \cap [N]|, |C \cap [N]| \} < \mathcal{H}(N)
        \]
    for all sufficiently large $N$.
\end{question}

\subsection{Proof of \cref{Forbidding-finite-patterns}}
We will find an $N$ and $A \subseteq [0,N)$ such that $A$ does not contain any shifts of $P \mod N$. Notice that we can simply concatenate the copies of $A$ as subsets of $[0,N), [N, 2N), [2N, 3N),$ etc; in order to find infinitely many values of $N,A$ with $\sigma_N (A) > 1- \epsilon$ and $A$ containing no shifts of $P$. 
We will need the following technical lemma.

\begin{lemma}\label{partition}
    Let $M, r \in \N$ with $M \geq 2$. Then there exists a $K \in \N$ such that for every sequence of $K$ not necessarily distinct natural numbers $p_1, \ldots p_K$, there is a partition $1 \leq i_0   <i_1 < \ldots < i_{r+1} \leq K$ such that either
    \begin{equation}\label{red}
          M \cdot  \sum_{j =i_{\ell-1}}^{i_{\ell}-1} p_j \leq \sum_{j =i_{\ell}}^{i_{\ell+1}-1} p_j, \  \forall \ell\in [r], 
    \end{equation}
        or 
        \begin{equation}\label{blue}
             M  \cdot  \sum_{j =i_{\ell}}^{i_{\ell+1}-1}p_j \leq \sum_{j =i_{\ell-1}}^{i_{\ell}-1} p_j, \    \forall \ell\in [r].
        \end{equation}
\end{lemma}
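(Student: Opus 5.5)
The plan is a Ramsey argument applied to consecutive blocks. The natural numbers $p_j\geq 1$ are positive, which is what makes this work. I would first reduce to a statement about a sequence of blocks, then colour pairs of blocks and extract a long monochromatic chain. The indices $i_0<\dots<i_{r+1}$ delimit $r+1$ consecutive blocks $[i_{\ell-1},i_\ell)$. Condition \eqref{red} says each block sum is at least $M$ times the previous one, and \eqref{blue} says each is at most $1/M$ times the previous one. So it suffices to find $r+1$ consecutive intervals whose sums form a chain that is either $M$-increasing or $M$-decreasing.

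\textbf{Step 1 (cutting into blocks of controlled sums).} I would not choose the blocks in advance. Instead I would use the freedom to choose the cut points, via a greedy scan. Starting at $i_0=1$, define $i_1$ as the first index at which the running sum of the current block exceeds $M$ times the previous block's sum. If this greedy procedure can be run $r$ times within $[K]$, we obtain \eqref{red}. Equality is fine because of the $\leq$, and we may also absorb extra terms to the right.

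\textbf{Step 2 (the failure case).} Suppose the procedure fails. Then there is a long stretch in which adding many terms never reaches $M$ times a fixed earlier sum $S$. So all the terms in that stretch are small relative to $S$, while $S$ itself was built from at most some bounded number of terms. To organise the two cases cleanly, I would instead use Ramsey's theorem. Consider the terms $p_1,\dots,p_K$ and colour a pair $a<b$ red if $p_b\geq M' p_a$, blue if $p_a\geq M' p_b$, and green otherwise, where $M'$ is a large auxiliary constant. A monochromatic red or blue set of size $r+1$ (of singletons) would then almost give the chain. The difficulty is that the blocks must be consecutive and cover the intermediate terms.

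\textbf{Step 3 (the cleaner route, by induction on $r$).} I would therefore argue by induction on $r$ with a dichotomy on the maximum term. Let $p_m$ be the largest term among the first $K$. If $m$ lies in the middle, say $m\geq K/2$, use the prefix before $m$. By induction, the prefix either contains a long decreasing chain, which I would extend by merging everything up to $m$ into a final big block (its sum is at least $p_m$). The issue is comparability of sizes, so the induction hypothesis should be strengthened. The strengthened form asks for a chain with the additional property that the last block in the red case, or the first block in the blue case, can be extended by all the remaining terms. The key estimate is that a block consisting of a single maximal term $p_m$ dominates any block of length at most $L$ up to the factor $L$. So if the chain blocks have length bounded by a function of $r$ and $M$, comparing against a block containing a term that is $ML$ times larger gives the needed factor. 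This suggests iterating on the maximum: either some term exceeds $ML$ times the previous bounded-length block, or all terms are comparable, in which case blocks of geometrically growing lengths $1, 2M, (2M)^2,\dots$ of comparable terms (ratio at most $ML$) give \eqref{red} directly with lengths growing like $(2M\cdot ML)^\ell$. The main obstacle is exactly this bookkeeping: choosing the bounded block lengths and the comparability threshold so that the two alternatives close up. I expect $K$ to be of tower type in $r$.
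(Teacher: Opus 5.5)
There is a genuine gap: the proposal does not yet contain a proof. Step 1 covers only the case where the greedy scan succeeds. Step 2 is rightly abandoned, since colouring pairs of single terms gives non-consecutive singletons. Step 3 stops at ``the main obstacle is exactly this bookkeeping'' without carrying that bookkeeping out.

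What is written in Step 3 also fails as stated:
\begin{enumerate}
\item A final block containing $p_m$ cannot extend a \emph{decreasing} chain.
\item For an increasing chain, containing $p_m$ does not make the new block's sum at least $M$ times the last chain block's sum. That block may consist of many terms close to $p_m$.
\item The case $m<K/2$ is never treated.
\item Most importantly, the dichotomy ``either some later term exceeds $ML$ times the previous block, or all terms are comparable'' is not exhaustive. Failing the first branch only gives an \emph{upper} bound on the later terms, which may still decay rapidly and so be far from comparable. That is exactly the regime in which \eqref{blue} has to be produced, and your plan gives no mechanism for producing it.
\end{enumerate}

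The missing idea is to colour triples of \emph{cut points} instead of pairs of terms. This removes precisely the consecutiveness obstruction you identified in Step 2.

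Colour $\{a<b<c\}\subseteq[K]$ red if $\sum_{j=a}^{b-1}p_j\le\sum_{j=b}^{c-1}p_j$, and blue otherwise. Ramsey's theorem for $3$-uniform hypergraphs yields a monochromatic set $c_0<c_1<\dots<c_{R+1}$. Its consecutive triples say that the consecutive block sums $T_\ell=\sum_{j=c_\ell}^{c_{\ell+1}-1}p_j$, for $0\le\ell\le R$, are monotone. This is the case $M=1$.

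To gain the factor $M$, take $R+1=1+M+\dots+M^r$ and merge adjacent runs of blocks:
\begin{enumerate}
\item runs of $1,M,\dots,M^r$ blocks when $T$ is nondecreasing;
\item runs of $M^r,\dots,M,1$ blocks when $T$ is decreasing.
\end{enumerate}
Let $G_{\ell-1},G_\ell$ be adjacent runs (sets of block indices) of sizes $M^{\ell-1}$ and $M^\ell$, with $T$ nondecreasing. Then $\sum_{t\in G_\ell}T_t\ge M^\ell\min_{t\in G_\ell}T_t\ge M^\ell\max_{t\in G_{\ell-1}}T_t\ge M\sum_{t\in G_{\ell-1}}T_t$, which is \eqref{red}. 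The decreasing case gives \eqref{blue} symmetrically.

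This is essentially the paper's proof, with $K$ a $3$-uniform Ramsey number. It needs none of the comparability bookkeeping on which your induction on the maximum stalls.
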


\begin{proof}
Let us first prove the statement for $M=1$. Let $K= R_{3}(r+2,2)$ the Ramsey number for $3$-subsets with $2$ colors. We color each $3$-subset of $[K]$ such that for $i_1<i_2<i_3$, the set $\{i_1,i_2,i_3\}$ is colored red if 
$$\sum_{j=i_1}^{i_2-1}p_j \leq \sum_{j=i_2}^{i_3-1} p_j$$
and blue otherwise. By Ramsey's theorem, there is a set $I:=\{i_{\ell}\}_{\ell=0}^{r+1}\subseteq [K]$ such that either every $3$-subset of $I$ is red or every $3$-subset of $I$ is blue. On one hand, if the color is red, then we obtain \cref{red}. On the other hand, if the color is blue, then get \cref{blue}, concluding the case $M=1$.

For the general case, take $r=  M^{r'+1}+1$, apply the first case and assume without loss of generality that 
$$    \sum_{j =i_{\ell-1}}^{i_{\ell}-1} p_j \leq \sum_{j =i_{\ell}}^{i_{\ell+1}-1} p_j, \  \forall \ell\in [r].  $$
Define for each $\ell\in \{0,\ldots,r'+1\}$ define $k_{\ell}= \sum_{j=0}^{\ell-1}M^j$. This way, we get that 
$$       M \cdot  \sum_{j =k_{\ell-1}}^{k_{\ell}-1} p_j \leq \sum_{j =k_{\ell}}^{k_{\ell+1}-1} p_j, \  \forall \ell\in [r'],  $$
concluding.
\end{proof}
From \cref{partition} we achieve the following corollary.

\begin{corollary}\label{gaps_exist}
    Let $M, r \in \N$ with $M \geq 2$. Then there exists a $K \in \N$ such that for every $A \subseteq \N_0$ with $|A| = K$ there exists $a_0< a_1< \ldots < a_r \in A$ with either
        \[
    M(a_{i}-a_{i-1}) \leq (a_{i+1} - a_{i}), \ \forall i\in [r-1],
        \]
        or 
        \[
    M(a_{i+1} - a_{i}) \leq (a_{i}-a_{i-1}),\ \forall i\in [r-1].
        \] 
\end{corollary}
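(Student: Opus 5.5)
We derive \cref{gaps_exist} directly from \cref{partition}, applied to the sequence of consecutive gaps of $A$. Given $M\geq 2$ and $r$, take $K'$ to be the constant from \cref{partition} with the same $M$ and with $r$ replaced by $r-1$. We then set $K=K'+1$; this extra element is needed because $K$ points determine only $K-1$ gaps.

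Let $A=\{x_1<x_2<\dots<x_K\}$ and define $p_j=x_{j+1}-x_j\in\N$ for $j\in[K-1]$. The key identity is that gaps telescope along blocks: for $i<i'$ we have
\[
\sum_{j=i}^{i'-1}p_j=x_{i'}-x_i .
\]
Applying \cref{partition} to $p_1,\dots,p_{K-1}$ gives indices $1\le i_0<i_1<\dots<i_r\le K-1$ (that is, $r+1$ indices, since we used $r-1$ in place of $r$) for which either \cref{red} or \cref{blue} holds for every $\ell\in[r-1]$. Each sum in those inequalities runs over a block $[i_{\ell-1},i_\ell-1]$ or $[i_\ell,i_{\ell+1}-1]$, so by the telescoping identity it equals $x_{i_\ell}-x_{i_{\ell-1}}$ or $x_{i_{\ell+1}}-x_{i_\ell}$ respectively.

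We therefore put $a_\ell=x_{i_\ell}$ for $\ell=0,\dots,r$. These are strictly increasing elements of $A$. Condition \cref{red} becomes
\[
M(a_\ell-a_{\ell-1})\le a_{\ell+1}-a_\ell \quad\text{for all }\ell\in[r-1],
\]
which is the first alternative of the corollary, and condition \cref{blue} becomes the second alternative. We need $i_r\le K-1$ so that every $x_{i_\ell}$ is actually defined, and this holds because $K-1=K'$.

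The argument has no genuine obstacle; the only care needed is in the index bookkeeping. One must check that the $r+2$ partition points supplied by \cref{partition} for parameter $r-1$ are exactly the $r+1$ points $a_0,\dots,a_r$ required here, and that the upper limit $i_{\ell+1}-1$ of each block corresponds to the endpoint $x_{i_{\ell+1}}$ under telescoping. The edge case $r=1$ is vacuous (any two elements of $A$ work), so we may assume $r\ge 2$ and apply the lemma with $r-1\ge 1$.
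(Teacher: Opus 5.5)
Your proof is correct and follows the paper's argument: you apply \cref{partition} to the consecutive gaps $p_j=x_{j+1}-x_j$, telescope the block sums, and set $a_\ell=x_{i_\ell}$. The only difference is bookkeeping. You invoke the lemma with the tight parameter $r-1$, which is why you treat $r=1$ separately, whereas the paper invokes it with $r+1$ and simply discards the surplus indices. (In your last paragraph, the lemma with parameter $r-1$ supplies $r+1$ partition points, not ``$r+2$''.)
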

\begin{proof}
    Let $K'\in \N$ given by \cref{partition} with $M$ and $r+1$. Set $K=K'+1$. Write in increasing order $A=\{b_i\}_{i=1}^K$, and set $p_j=b_{j+1}-b_{j}$ for $j\in [K-1]$. By the conclusion of \cref{partition} we have that there is a partition $1 \leq i_0   <i_1 < \ldots < i_{r+1} \leq K$ such that either \cref{red} or \cref{blue} hold. Observing that 
    $$\sum_{j=i_{l-1}}^{i_l-1}p_j= \sum_{j=i_{l-1}}^{i_l-1} b_{j+1}-b_j= b_{i_l}-b_{i_{l-1}}, $$
    and setting $a_l=b_{i_l}$ for each $l\in \{0,\ldots,r\}$ yields the conclusion.
\end{proof}
The motivation behind \cref{gaps_exist} is that if a set is large enough, we can always obtain a subset with gaps geometrically increasing. Such kind of patterns are manageable to forbid because they can be treated approximately like an arithmetic progression, which is the content of the following lemma. 
\begin{lemma}[Forbidding Increasing Gap Sequences]\label{Forbidding-increasing-gaps}
    Let $\epsilon> 0 $ and $r\in \N$. There exists $M \in \N$ such that if $B = \{b_0 < \ldots< b_r\} $ with either $$\forall i\in [r-1], \ M(b_{i}-b_{i-1}) \leq (b_{i+1}-b_i) \text{ and } b_1-b_0 \geq M $$ or $$\forall i\in [r-1], \ M(b_{i+1}-b_i) \leq (b_{i}-b_{i-1}) \text{ and } b_r - b_{r-1} \geq M ,$$ then for infinitely many $N$ there is a set $A \subseteq \Z_N$ with $$\sigma_{N}(A) \geq \left(\frac{r}{r+1}\right) (1 - \epsilon),$$ such that $A$ contains no shift of $B$ in $\Z_N$. 
\end{lemma}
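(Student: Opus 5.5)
The plan is to realize every shift of $B$ as $r+1$ points that are almost equally spaced on a circle, and then delete one arc of length about $1/(r+1)$. Concretely, I would look for $\theta\in(0,1)$ and an arc $D\subseteq\R/\Z$ of length $\frac{1}{r+1}+O(r\eta)$ such that for every $t$ some point $(t+b_j)\theta \bmod 1$ lies in $D$. Then I take $A$ to be the set of $x$ with $x\theta\notin D$, discretized to $\Z_N$. Write $g_j=b_j-b_{j-1}$ for $j\in[r]$. It suffices to find $\theta$ with
\[
\Bigl\| g_j\theta-\tfrac{1}{r+1}\Bigr\|_{\R/\Z}\le \eta \qquad\text{for all } j\in[r].
\]
Indeed, then $(b_j-b_0)\theta\equiv \frac{j}{r+1}+e_j$ with $|e_j|\le r\eta$. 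So the $r+1$ points $t\theta+b_j\theta$ are within $r\eta$ of a rotated copy of the equally spaced set $\{\frac{j}{r+1}\}$. Any arc of length $\frac{1}{r+1}+2r\eta$ therefore contains one of them. Choosing $\eta$ with $(r+1)2r\eta\le\epsilon$ leaves the complement of $D$ with measure at least $\frac{r}{r+1}(1-\epsilon)$.

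\textbf{Finding $\theta$.} This is a nested-interval argument driven by the geometric gap growth. For each $j$, the set $E_j=\{\theta:\|g_j\theta-\frac1{r+1}\|\le\eta\}$ is a union of intervals of length $2\eta/g_j$ with period $1/g_j$.

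In the increasing case, $g_{j+1}\ge Mg_j$. I take $M\ge 1/\eta$, so that $2\eta/g_j\ge 2/g_{j+1}$. Then every component interval of $E_j$ contains a full component interval of $E_{j+1}$, even with room to spare. Starting from a component of $E_1$ inside $(0,1)$, which is possible since $g_1\ge M$ is large, I nest down and obtain a nonempty open set $U$ of admissible $\theta$. In the decreasing case the gaps increase when read from $b_r$ down to $b_0$. I run the same nesting starting from $g_r\ge M$ and proceed to larger gaps; the conclusion is symmetric. Since $U$ is open, it contains rationals $a/N$ for infinitely many $N$. For such $\theta=a/N$, the set $A=\{x\in\Z_N: \{xa/N\}\notin D\}$ is well defined on $\Z_N$ and contains no shift of $B$ modulo $N$, because the covering argument above works verbatim for $t\in\Z_N$.

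\textbf{Main obstacle: Schnirelmann density.} The bound $\sigma_N(A)\ge\frac{r}{r+1}(1-\epsilon)$ must hold for every initial segment $[0,k)$, not merely on average. Here I would use the freedom to rotate $D$, replacing $x\theta$ by $x\theta+\varphi$. I would choose $\varphi$ so that the orbit starts at the beginning of the complement arc $D^c$. I would also exploit the freedom to take $\theta$ small, i.e.\ in a component near $0$, which is available by the nesting with $g_1$ or $g_r$ large.

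With $\theta$ small, the orbit $x\theta+\varphi$ moves monotonically around the circle in steps much smaller than $\eta$. It first traverses $D^c$, so that stretch lies entirely in $A$, and then crosses $D$. Hence, over any initial segment, the proportion of points in $A$ is at least the minimum over partial laps. That minimum occurs right after the first crossing of $D$, where the proportion is about $|D^c|$ up to an error of order $\theta/\eta$. Enlarging $M$ (hence shrinking $\theta$) and shrinking $\eta$ absorbs this error into $\epsilon$. The step I expect to need the most care is making this lap-counting estimate uniform in $k$, together with ensuring that the rational $a/N$ can be chosen in $U$ with $a/N$ small and $N$ arbitrarily large. The latter holds because $U$ contains an open subinterval near the smallest admissible scale.
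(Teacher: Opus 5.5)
Your argument is correct, and it takes a genuinely different route from the paper's.

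\textbf{The paper's route.} The paper treats the increasing case with $b_0=0$ and works with the explicit modulus $N=b_1+b_r$. It builds $A$ by a multi-scale block recursion.
\begin{enumerate}
\item Avoiding shifts of $B$ modulo $b_1+b_r$ is reduced to avoiding shifts of $\{0,b_1,2b_1,b_2+b_1,\dots,b_{r-1}+b_1\}$ modulo $b_{r-1}+2b_1$. This is done by tiling with copies of a good set and leaving a remainder block empty.
\item The reduction is iterated until the pattern is the progression $\{0,b_1,\dots,rb_1\}$ modulo $(r+1)b_1$, which $[0,rb_1)$ avoids.
\item Lacunarity makes the number $\ell^{(j)}$ of blocks at each level large. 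So the Schnirelmann density only loses factors $\ell^{(j)}/(\ell^{(j)}+2)$.
\item Further values of $N$ come from concatenating copies.
\end{enumerate}

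\textbf{Your route.} You make the heuristic ``$B$ behaves like an arithmetic progression'' literal with a single Bohr set. Lacunarity, via nested intervals, yields one frequency $\theta$ with $\|g_j\theta-\frac1{r+1}\|\le\eta$ for every $j$. Then each shift of $B$ becomes a nearly equally spaced $(r+1)$-tuple on the circle, and one deleted arc of length just over $\frac1{r+1}$ meets all of them at once.

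\textbf{What each buys.} Your approach needs no case analysis of wrap-around shifts. It uses the same argument for both gap orientations, with only the direction of nesting changing, whereas the paper reduces one case to the other by symmetry. It also gives every sufficiently large $N$, rather than multiples of $b_1+b_r$. The price is the Diophantine step and a separate initial-segment estimate. The hypothesis $b_1-b_0\ge M$ (resp.\ $b_r-b_{r-1}\ge M$) is exactly what forces $\theta\le 1/M$ for that estimate.

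\textbf{The step you flagged.} It closes cleanly. Take $D^c=[0,|D^c|)$ and suppose $(k-1)\theta$ lies in the $j$-th passage through $D$, with $j\ge 1$. Then $|A\cap[0,k)|\ge j(|D^c|/\theta-1)$ and $k<j/\theta+1$. The ratio $|A\cap[0,k)|/k$ only increases while the orbit traverses $D^c$, and it equals $1$ before the first passage. Hence $\sigma_N(A)\ge (|D^c|-\theta)/(1+\theta)$. The error is therefore $O(\theta)$ rather than $O(\theta/\eta)$. Your cruder bound is also harmless, since $M$ is chosen after $\eta$.
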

\begin{proof}
    Without loss of generality take $B=\{0 = b_0 < b_1< \ldots < b_r\} $ such that $M(b_{i}- b_{i-1}) < b_{i+1} - b_i$ for each $i\in [r-1]$. Our goal is to construct a set $A \subseteq [0,b_1 + b_r)$ such that $A$ contains no shifts of $B $ mod $b_1 + b_r$. It suffices to construct a set $A$ that contains no shifts of the set $B_1:=\{0, b_1, b_1+b_1, b_2 + b_1, \ldots ,b_{r-1} + b_1\}$ mod $b_1 + b_r$. For each $0 \leq i \leq r-2$ let $\ell^{(i)}$ be such that for some $0 \leq k_i < (b_{r-i-1} + (i+2)b_1)$ we have
        \[
        (\ell^{(i)} +1)(b_{r-i-1} + (i+2)b_1) + k_i = b_{r-i} + (i + 1)b_1.
        \]
    Namely, $\ell^{(0)}$ is such that such that $(\ell^{(0)} + 1)(b_{r-1} + 2 b_1) + k_0 = b_r + b_1$ for some $0 \leq k_0 < b_{r-1} + 2b_1$. We will partition $[0, b_1 + b_r )$ into the following $\ell^{(0)} + 1$ many pieces in the following way:
    \[ [0, b_r + b_1 )=  \left(\bigsqcup_{i=1}^{\ell^{(0)}} \left[(i - 1) (b_{r-1} + 2 b_1) , i (b_{r-1} + 2 b_1)\right) \right) \sqcup  \left[\ell^{(0)}(b_{r-1} + 2 b_1), b_r + b_1 \right).\]
    Suppose that there exists $A^1 \subseteq [0,b_{r-1} + 2 b_1)$ such that $A^1$ contains no shifts of $B_1$ mod $b_{r-1} + 2b_1$. Then build $A$ by in each set $[(i-1)(b_{r-1} + 2 b_1) , i(b_{r-1} + 2 b_1))$ including $(i-1)(b_{r-1} + 2 b_1) + A^1$, but not including the elements of $\left[\ell^{(0)}(b_{r-1} + 2 b_1), b_r + b_1 \right)$, this is:
    \[ A=\bigcup_{i=1}^{\ell^{(0)}} \left( (i-1)(b_{r-1} + 2 b_1) + A^1  \right)\]
    
    We claim $A$ contains no shifts of $B$ mod $b_1 + b_r$. We prove this by contradiction and dividing in three cases:

    \textbf{Case 1:} Assume that $A$ contains a shift $j + B $ mod $b_r + b_1$ where $0\leq j \leq b_r+b_1 -1$ and $j + b_r < b_r + b_1$.

    Note first that $|(b_r + b_1 -1) - (j + b_r)|\leq b_1 -1$. Since the length of $\left[\ell^{(0)}(b_{r-1} + 2 b_1), b_r + b_1 \right)$ is greater than or equal to $b_{r-1} + 2b_1$ and $b_1 -1< b_{r-1} + 2b_1$ we have $$\ell(b_{r-1} + 2 b_1) < j +b_r \leq b_r + b_1 -1.$$ Hence, $j + b_r \notin A$ and $j + B \not\subseteq A $ mod $b_r + b_1$, a contradiction.

    \textbf{Case 2:} Assume that $A$ contains a shift $j + B $ mod $b_r + b_1$ where $0\leq j \leq b_r+b_1 -1$, $j + b_r \geq b_r + b_1 $ and $j + b_{r-1} < b_r + b_1$. Then, taking $i = j-b_1 $, $A$ contains a set of the form $$i+B_1=\{i, i+b_1, i + b_1 + b_1, i + b_2 + b_1, \ldots, i + b_{r-1} + b_1\}.$$ This way, either $i + b_{r-1} + b_1$ is in the final partition element $\left[\ell^{(0)}(b_{r-1} + 2 b_1), b_r + b_1 \right)$ or $A^1$ contains a shift of $B_1$ mod $b_{r-1}+2b_1$, a contradiction.

    \textbf{Case 3:} Assume that $A$ contains a shift $j + B $ mod $b_r + b_1$ where $0\leq j \leq b_r+b_1 -1$, $j + b_{r-1} \geq b_r + b_1 $. Then, either $$j+ b_i \in \left[\ell^{(0)}(b_{r-1} + 2 b_1), b_k + b_1 \right),\text{ or }j + b_i \geq b_r + b_1$$ for each $i$, in which case since $b_0 = 0$ we have that $j \geq b_r + b_1 -1$, a contradiction in either case.

    Hence, by exhaustion of all possible cases the result follows. We are left to prove that it is possible to find a set $A^1$ in $[0, b_{r-1} + 2b_1 )$ containing no shift of $B_1$ mod $b_{r-1}+2b_1$. For this, we partition $[0, b_{r-1} + 2b_1 )$ into $\ell^{(1)}+1$ many sets
        \[ [0, b_{r-1}+2b_1 )= \left( \bigsqcup_{i=1}^{\ell^{(1)}}\left[(i -1)(b_{r-2} + 3b_1), i(b_{r-2} + 3b_1)\right) \right) \sqcup   \left[\ell^{(1)}(b_{r-2} + 3b_1), b_{r-1}+ 2b_1 \right).\]
    
    Similarly to the first iteration, it would suffice to construct a $A^2 \subseteq [0, b_{r-2} + 3b_1)$ that contains no shifts of $B_2:=\{0, b_1, 2b_1, 3b_1, b_2 + 2b_1, b_3 + 2b_1, \ldots , b_{r-2} + 2b_1\}$ mod $b_{r-2} + 3b_1$, and then set
        $$A^1:=\bigcup_{i=1}^{\ell^{(1)}} \left( (i-1)(b_{r-2} + 3 b_1) + A^2  \right). $$
 
    Continuing this process in the natural way, in each iteration we partition the interval 
\begin{align*}
    [0, b_{r-j}+(j+1)b_1 )= &\left( \bigsqcup_{i=1}^{\ell^{(j)}}\left[(i -1)(b_{r-(j+1)} + (j+2)b_1), i(b_{r-(j+1)} + (j+2)b_1)\right) \right)\\
    &\sqcup   \left[\ell^{(j)}(b_{r-(j+1)} + (j+2)b_1), b_{r-j}+ (j+1)b_1 \right).
\end{align*}
    to construct a set $A^{j}\subseteq [0,b_{r-j} + (j+1)b_1)$ containing no shifts of $B_j:=\{0,b_1,2b_1,\ldots,(j+1)b_1, b_2+jb_1,\ldots, b_{r-j}+jb_1\}$ mod $b_{b_{r-j} + (j+1)b_1}$ such that  
    $$A^{j}:=\bigcup_{i=1}^{\ell^{(j)}} \left( (i-1)(b_{r-(j+1))} + (j+2) b_1) + A^{j+1}  \right), $$
    where $A^{j+1}$ is yet to be defined in the next iteration.
    
    Consequently, we reduce to the goal of constructing a $A^{r} \subseteq [0, (r+1)b_1 ) $ that contains no shifts of $B_{r}=\{0, b_1, 2b_1, \ldots rb_1\}$. To do so, we set $A^r:=\{0, 1,2,3, \ldots rb_1 -1 \}$. We observe that 
    $$\sigma_{(r+1)b_1}(A^r)= \left( \frac{r}{r+1}\right). $$
    Moreover, since in each step of our proof $A^j$ is construction gluing up $\ell^{(j)}$ copies of $A^{j+1}$ and leaving a small interval empty, we get that for each $j\in [r]:$
    $$ \sigma_{b_{r-j} + (j+1)b_1}(A^j) \geq \sigma_{b_{r-(j+1)} + (j+2)b_1}(A^{j+1}) \left(\frac{\ell^{(j)}}{\ell^{(j)}+2}\right) \geq \left(\frac{r}{r+1}\right)\cdot \min_{i=j,\ldots,r-1}  \left(\frac{\ell^{(i)}}{\ell^{(i)}+2}\right)^{r-1}.$$
    
    We thus have constructed a $A \subseteq [0,b_1 + b_r )$ that contains no shifts of $B$, and such that
        \[\sigma_{b_r + b_1}(A) \geq \left(\frac{r}{r+1}\right)\cdot \min_{i=1,\ldots,r-1}  \left(\frac{\ell^{(i)}}{\ell^{(i)}+2}\right)^{r-1}.\]
    Here note that $\min_{i=1,\ldots,r-1}|\frac{\ell^{(i)}}{\ell^{(i)}+2}|$ can be made arbitrarily close to $1$ by choosing $M$ to be large. Hence, we can choose $M$ such that $$\sigma_{b_r + b_1}(A) \geq \left(\frac{r}{r+1}\right) (1 - \epsilon) ,$$
    concluding.
\end{proof}

Finally, we prove \cref{Forbidding-finite-patterns}.
\begin{proof}[Proof of \cref{Forbidding-finite-patterns}]
    Let $\epsilon>0$ and $r\in\N$. Let $$\delta= 1- \frac{1-\epsilon}{1-\frac{1}{1+r}}$$ and $M\in \N$ given by \cref{Forbidding-increasing-gaps} for $\delta$ and $r$. Then, let $K\in \N$ given by \cref{gaps_exist}, and let $P\subseteq\N$ with $|P|\geq K$. Then, by \cref{gaps_exist}, there is a subset $B=\{b_i\}_{i=0}^r\subseteq P$ such that either $\forall i\in [r], \ M(b_{i}-b_{i-1}) \leq (b_{i+1}-b_i) $ or $\forall i\in [r], \ M(b_{i+1}-b_i) \leq (b_{i}-b_{i-1})$. By \cref{Forbidding-increasing-gaps} applied with $\delta$, $r$, and $B$, we find $N\in \N$ and $A\subseteq \Z_N$ with $|A|\geq (1-\frac{1}{r+1})(1-\delta)N=(1-\epsilon)N$ such that $A$ contains no shift of $B$, concluding the proof.
    \end{proof}

\section{Asymptotics for finite sumsets}\label{sec3}
In this section, we prove \cref{Finitistic-Intersectivity-Lemma} and its more general version. To build intuition, we first prove \cref{Finitistic-Intersectivity-Lemma} before turning to the general result. Finally, we prove \cref{Finite-Sumsets-Growth-rate} by actually providing a growth rate for $k$-fold sumsets $B_1+\cdots+B_k$.
\subsection{Proof of \cref{Finitistic-Intersectivity-Lemma}}
We recall that for $\delta \in (0,1)$ and $N\in \N$, we define $\phi_{\delta}$ as the largest integer such that any set $A \subseteq [N]$ with $|A| \geq \delta N$ contains a set of the form $B+C$ where $B,C \subseteq [N]$ satisfy $\min\{|B|,|C|\} \geq \phi_{\delta}(N)$. Here we restate \cref{Finitistic-Intersectivity-Lemma} for readability.
\begingroup
\def\thetheorem{\ref{Finitistic-Intersectivity-Lemma}}
\begin{theorem}
       Let $\delta\in (0,1)$ and $0<\sigma<1/\log{(\delta^{-1})}$. Let $\alpha>0$ and let $M=M(N) =\omega(N^\alpha)$ for $N\in \N$. Then, there is $\overline{N}\in \N$ such that for all $N\geq \overline{N}$, the following holds: for each $m\in [M]$ let $A_{m}\subseteq [N]$ satisfying $|A_{m}|\geq \delta N$. Then there is $B\subseteq [M] $ with $|B|\geq \sigma \log{N}$ such that 
        $$\left| \bigcap_{m\in B} A_{m} \right| \geq N^{1-\sigma \log(\delta^{-1})}.$$
\end{theorem}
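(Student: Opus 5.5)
The plan is a first-moment (double counting) argument over $k$-subsets of $[M]$, with $k=\lceil \sigma\log N\rceil$. For each $x\in[N]$ let $d(x)=|\{m\in[M]: x\in A_m\}|$. Then
\[
\sum_{x\in[N]} d(x)=\sum_{m}|A_m|\geq \delta NM.
\]
Consider the count
\[
T=\sum_{B\in\binom{[M]}{k}} \Bigl|\bigcap_{m\in B}A_m\Bigr| = \sum_{x\in[N]}\binom{d(x)}{k}.
\]
Since $t\mapsto\binom{t}{k}$ is convex on the relevant range (extended as a convex function, equal to $0$ for $t<k$), Jensen gives $T\geq N\binom{\delta M}{k}$. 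Hence some $B$ of size $k$ satisfies
\[
\Bigl|\bigcap_{m\in B}A_m\Bigr|\geq N\,\binom{\delta M}{k}\Big/\binom{M}{k}.
\]

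The remaining task is to estimate this ratio. We have
\[
\frac{\binom{\delta M}{k}}{\binom{M}{k}}=\prod_{i=0}^{k-1}\frac{\delta M-i}{M-i}\geq \Bigl(\delta-\frac{k}{M}\Bigr)^{k}=\delta^{k}\Bigl(1-\frac{k}{\delta M}\Bigr)^{k}.
\]
Because $M=\omega(N^\alpha)$ and $k=O(\log N)$, we get $k^2/M\to0$, so the correction factor tends to $1$. Then $\delta^k=N^{-\sigma'\log(\delta^{-1})}$ with $\sigma'=k/\log N$, and $\sigma'$ is within $1/\log N$ of $\sigma$. Choosing $k=\lceil\sigma\log N\rceil$ introduces at most an extra factor $\delta$.

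To absorb these constant losses and still reach the exact exponent $N^{1-\sigma\log(\delta^{-1})}$, I would run the argument with $k=\lceil\sigma\log N\rceil$ and check whether $N\delta^{k}(1-o(1))\geq N^{1-\sigma\log\delta^{-1}}$. The rounding costs a factor up to $\delta$, which a bare inequality cannot absorb. The plan is to pick a slightly smaller target: apply the argument with $\sigma_1\in(\sigma,1/\log\delta^{-1})$ when bounding $|B|$ from below, but with $k=\lceil\sigma\log N\rceil$ itself. Then $\delta^{k}\geq\delta\cdot N^{-\sigma\log\delta^{-1}}$, and the loss is a constant factor. Here I would exploit the fact that the statement only needs $|B|\geq\sigma\log N$. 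Taking $|B|=k$ exactly at the ceiling means the worst case loses the factor $\delta\cdot(1-o(1))$. If that cannot be absorbed exactly, I would weaken the claimed intersection bound by a constant, or use the slack $N^{1-\sigma\log\delta^{-1}}$ versus $N^{1-\sigma\log\delta^{-1}}\cdot\delta$ by replacing $\sigma$ with $\sigma-\eta$ in the exponent. The hypothesis $\sigma<1/\log\delta^{-1}$ is just what makes the exponent positive, so the intersection is nonempty and indeed large.

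I expect the main obstacle to be this bookkeeping of constants and integer rounding in the exponent, together with justifying Jensen for the binomial (that is, handling points with $d(x)<k$). The combinatorial core, averaging over $k$-subsets, is straightforward. The density condition $M=\omega(N^\alpha)$ is used only to make $k^2/M=o(1)$ (indeed $M\gg\log^2 N$ suffices) and to guarantee $\delta M\geq k$.
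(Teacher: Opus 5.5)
Your route is the paper's route. The identity $\sum_{B}\bigl|\bigcap_{m\in B}A_m\bigr|=\sum_{x}\binom{d(x)}{k}$ is exactly the paper's computation of $\E\bigl|\bigcap_{m\in B}A_m\bigr|$ for a uniformly random $k$-set $B$, and you follow it with the same Jensen step for the convexified binomial. Everything you derive up to
\[
\Bigl|\bigcap_{m\in B}A_m\Bigr|\ \ge\ N\binom{\delta M}{k}\Big/\binom{M}{k}\ \ge\ (1-o(1))\,N\delta^{k},\qquad k=\lceil\sigma\log N\rceil,
\]
is correct. The obstacle you flag at the end, however, is not bookkeeping, and none of your patches removes it. Every admissible $B$ has at least $k$ elements, and enlarging $B$ only shrinks the intersection. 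Moving $\sigma$ in either direction just changes the size requirement or the target, i.e.\ it proves a different statement.

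The paper's proof fails at the same point. It asserts $N\binom{M}{k}^{-1}\binom{\delta M}{k}\ge N\delta^{\sigma\log N}$ for large $N$. But each factor $(\delta M-i)/(M-i)$ is at most $\delta$ and $k\ge\sigma\log N$, so the left side is actually \emph{smaller} than $N\delta^{\sigma\log N}$ once $k\ge 2$. The Stirling estimate only yields $\sim\delta^{k}$, with the rounded exponent.

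In fact no argument can give the exact bound, because the statement as written is false for infinitely many $N$. Take $M=\lceil N^{2\alpha}\rceil$ and independent random sets $A_1,\dots,A_M$, each containing every $x\in[N]$ independently with probability $p=\delta+N^{-1/3}$. With high probability all $|A_m|\ge\delta N$. For a fixed $k$-set $B$, $|\bigcap_{m\in B}A_m|$ is binomial with mean $\mu=Np^k=(1+o(1))N\delta^k$. This mean is at least a positive power of $N$ since $\sigma\log(\delta^{-1})<1$. So Chernoff's bound $e^{-\epsilon^2\mu/3}$ for exceeding $(1+\epsilon)\mu$ beats the union bound over $\binom{M}{k}\le e^{O(\log^2 N)}$ sets, and larger $B$ have smaller intersections. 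Hence, with high probability, every admissible $B$ satisfies
\[
\Bigl|\bigcap_{m\in B}A_m\Bigr|\le(1+2\epsilon)\,\delta^{\,k-\sigma\log N}N^{1-\sigma\log(\delta^{-1})}.
\]
There are infinitely many $N$ with $k-\sigma\log N\ge 1/2$, because $\sigma\log N\to\infty$ in steps tending to $0$. For those $N$ this bound is below $N^{1-\sigma\log(\delta^{-1})}$ once $\epsilon$ is small.

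What your argument, like the paper's, actually proves is the conclusion with a factor $(1-o(1))\,\delta^{\lceil\sigma\log N\rceil-\sigma\log N}\ge(1-o(1))\delta$ lost. In particular, $|\bigcap_{m\in B}A_m|\ge N^{1-\sigma'\log(\delta^{-1})}$ for every fixed $\sigma'>\sigma$ and large $N$. State and prove that version. It is all the sumset applications need, since they only use that the intersection is polynomially large.
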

\addtocounter{theorem}{-1} 
\endgroup
For the sake of brevity, we will omit the explanation of some details. These details will be formalized later in the more general proof of \cref{FIL}.
\begin{proof}
    Let $n\in [N]$ be a uniform random variable in $[N]$. Define 
    $$L_n=\{m\in [M] : n\in A_m\}. $$
    By the hypothesis over $A_m$, $m\in [M]$, we have that
    $$\E(|L_n|) = \sum_{m\in [M]} \bP(n\in A_m)\geq \delta M.   $$
    Now, pick $B\in \binom{[M]}{\sigma \log{N}}$ uniformly at random and independent from $n$. 
    \begin{align*}
        \E\left( \left| \bigcap_{m\in B} A_m \right| \right) &= N \E\left(\bP\left(  n\in \bigcap_{m\in B} A_m \Bigg| n\right) \right)\\
        &= N \E\left(\bP\left( B\subseteq L_n  \Bigg| n\right) \right)\\
        &= N \binom{M}{\sigma \log{N}}^{-1} \E\left(  \binom{|L_n|}{\sigma \log{N}} \right) .
    \end{align*}
    Using the fact that the function $\Psi$ defined as
    $$
    \Psi(x) =\begin{cases}
        0 & \text{ if } x<y\\
        \binom{x}{y} & \text{ if }x\geq y
    \end{cases} .
    $$
    is convex for a fixed $y\in [N]$, we get that 
    \begin{align*}
         \E\left(  \binom{|L_n|}{\sigma \log{N}} \right)  \geq    \binom{\E\left(|L_n|\right)}{\sigma \log{N}} \geq \binom{ \delta M}{\sigma \log{N}}. 
    \end{align*}
    This way, we have that 
    $$ \E\left( \left| \bigcap_{m\in B} A_m \right| \right)\geq N \binom{M}{\sigma \log{N}}^{-1} \binom{\E\left(|L_n|\right)}{\sigma \log{N}} \geq N \binom{M}{\sigma \log{N}}^{-1}\binom{ \delta M}{\sigma \log{N}}.$$
    For $N$ sufficiently large we obtain that 
    $$\E\left( \left| \bigcap_{m\in B} A_m \right| \right)\geq   N (\delta M/M)^{\sigma \log{N}}=  N^{1-\sigma \log(\delta^{-1})}, $$
    from which the conclusion follows. 
\end{proof}

\subsection{A generalization of \cref{Finitistic-Intersectivity-Lemma}}
In this section, we will make use of the following fact which is a basic consequence of Stirling's formula: If $n=\omega(u^2)$ then
$$\binom{n}{u}\thicksim \left(\frac{ne}{u}\right)^u  (2\pi u)^{-1/2}.$$
In particular, if $n_1,n_2=\omega(u^2)$ then 
\begin{equation}\label{estimate-binomial-coeff}
   \binom{n_1}{u}\binom{n_2}{u}^{-1}\thicksim \left(\frac{n_1}{n_2}\right)^u   . 
\end{equation}
We now state our generalization of \cref{Finitistic-Intersectivity-Lemma}.
 \begin{theorem}\label{FIL}
      Let $\delta,\tilde{\delta}\in (0,1)$ such that $\tilde{\delta}<\delta$, $k\in \N$ and $0<\sigma<1/\log{(\tilde{\delta}^{-1})}$. Let $\alpha>0$ and let $M=M(N) =\omega(N^\alpha)$ for $N\in \N$. There is $\overline{N}\in \N$ such that for all $N\geq \overline{N}$, the following holds: for each $\boldsymbol{m}\in [M]^{k}$ let $A_{\boldsymbol{m}}\subseteq [N]$ satisfying $|A_{\boldsymbol{m}}|\geq \delta N$. Then, there are $B_1,\ldots,B_{k}\subseteq [M] $ with $|B_i|\geq (\sigma \log{N})^{1/k}$ for each $i\in [k]$, such that 
        $$\left| \bigcap_{\boldsymbol{m}\in B_1\times \cdots \times B_{k}} A_{\boldsymbol{m}} \right| \geq N^{1-\sigma \log(\tilde{\delta}^{-1})}.$$
\end{theorem}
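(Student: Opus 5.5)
We argue by induction on $k$, running the first-moment argument from the proof of \cref{Finitistic-Intersectivity-Lemma} once per coordinate. The case $k=1$ is that argument, with $\tilde\delta<\delta$ providing slack to absorb the $\sim$ errors in \eqref{estimate-binomial-coeff}. Set $u=\lceil(\sigma\log N)^{1/k}\rceil$. We will choose $B_1,\dots,B_k\in\binom{[M]}{u}$ one coordinate at a time and use that $u^k\ge\sigma\log N$.

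We first make a single random choice explicit. Let $n$ be uniform in $[N]$, and fix sets $B_2,\dots,B_k$ (initially none chosen). For each $m_1\in[M]$ put
\[
A'_{m_1}=\bigcap_{(m_2,\dots,m_k)\in B_2\times\cdots\times B_k}A_{(m_1,m_2,\dots,m_k)}.
\]
As in the $k=1$ proof, define $L_n=\{m_1: n\in A'_{m_1}\}$ and take $B_1$ uniform in $\binom{[M]}{u}$. Then
\[
\E\Bigl|\bigcap_{m_1\in B_1}A'_{m_1}\Bigr| = N\binom{M}{u}^{-1}\E\binom{|L_n|}{u}\ge N\binom{M}{u}^{-1}\binom{\E|L_n|}{u}
\]
by convexity of $\Psi$. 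Since $M=\omega(N^\alpha)$ and $u=O(\log N)$, we have $M=\omega(u^2)$. Provided $\E|L_n|=\omega(u^2)$ as well, \eqref{estimate-binomial-coeff} gives a lower bound of roughly $N(\E|L_n|/M)^{u}$.

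A naive coordinate-by-coordinate iteration loses: density $\delta$ becomes $\delta^{u}$, which becomes $\delta^{u^2}$, and so on, so the final exponent is $u^k\log\delta^{-1}\approx\sigma\log N\log\delta^{-1}$, which is exactly the target. The intermediate sets, however, are intersections over a *single* $n$ rather than uniformly dense families, so a cleaner route is to randomize all $B_i$ simultaneously. Pick independent uniform $B_1,\dots,B_k\in\binom{[M]}{u}$, and for fixed $n$ let $S_n=\{\boldsymbol m: n\in A_{\boldsymbol m}\}\subseteq[M]^k$, whose expected density over $n$ is at least $\delta$. Then
\[
\E\Bigl|\bigcap_{\boldsymbol m\in B_1\times\cdots\times B_k}A_{\boldsymbol m}\Bigr| = N\,\E\,\P\bigl(B_1\times\cdots\times B_k\subseteq S_n\mid n\bigr).
\]
The key step is a hypergraph counting inequality: if $S\subseteq[M]^k$ has density $\rho$, then the probability that a random $u\times\cdots\times u$ box lies in $S$ is at least about $\rho^{u^k}$. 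This is a Sidorenko-type statement for complete $k$-partite $k$-graphs, which holds with exactly this exponent. We prove it by induction on $k$ via Hölder/Jensen. Conditioning on $B_2,\dots,B_k$, the probability that $B_1$ works is $\binom{M}{u}^{-1}\binom{d}{u}\approx(d/M)^{u}$, where $d$ is the number of $m_1$ whose fiber contains $B_2\times\cdots\times B_k$. Applying Jensen with the convex map $x\mapsto x^{u}$ reduces the problem to $\E(d/M)$. That quantity is a box probability in dimension $k-1$ averaged over $m_1$, which induction bounds by roughly $\rho_{m_1}^{u^{k-1}}$, and a further Jensen step gives $\rho^{u^k}$. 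Finally, averaging over $n$ by convexity yields an expectation of at least $N\delta^{u^k}(1-o(1))^{\dots}$.

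The main obstacle is controlling the accumulated $\sim$ errors from \eqref{estimate-binomial-coeff} and from the $\Psi$ truncation. The approximation $\binom{d}{u}/\binom{M}{u}\approx(d/M)^u$ fails when $d$ is not $\omega(u^2)$. We handle this by using the exact inequality $\binom{d}{u}/\binom{M}{u}\ge((d-u)/M)^u\ge (d/M)^u-$(error), and by noting that contributions where $d/M<\tilde\delta$-level thresholds appear are affordable. Concretely, we replace $\delta$ by $\delta'\in(\tilde\delta,\delta)$ at each of the $k$ stages, so the cumulative loss is absorbed into $\delta^{u^k}\ge\tilde\delta^{u^k}N^{o(1)}$. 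With $u^k\le\sigma\log N+O((\log N)^{(k-1)/k})$ and strict slack $\tilde\delta<\delta$, the final expectation is at least $N^{1-\sigma\log\tilde\delta^{-1}}$ for large $N$. Some choice of $B_1,\dots,B_k$ attains the expectation, which proves the theorem.
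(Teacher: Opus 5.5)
Your final argument takes a different route from the paper. You fix $n$ and prove a Sidorenko-type lower bound for the probability that a random box $B_1\times\cdots\times B_k$ lies in the slice $S_n$. This is an induction on dimension, with one Jensen step over the $B$'s and a second over the fiber densities $\rho_{m_1}$, and you average over $n$ only at the end. The paper instead keeps $n$ random throughout and inducts on the coordinate index. Because each $A_{\boldsymbol m}$ has density at least $\delta$, after averaging over $n$ every line has at least $\delta M$ expected points. So the paper's bounds are uniform in the frozen coordinates, no Jensen over fibers is needed, and \eqref{estimate-binomial-coeff} is only applied at deterministic arguments of size at least $N^{\alpha/2}$. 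Once repaired as below, your route proves slightly more: it only needs the average of $|A_{\boldsymbol m}|/N$ over all $\boldsymbol m$ to be at least $\delta$.

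The gap is the error control, which is exactly where your route departs from the paper's. For fixed $n$ the densities $\rho_n$ and $\rho_{m_1}$ can be arbitrarily small. There the claim ``box probability at least about $\rho^{u^k}$'' fails for sampling without replacement: for $S=[u-1]\times[M]^{k-1}$ the probability is $0$. Linearizing as $\binom{d}{u}/\binom{M}{u}\ge (d/M)^u-(\text{error})$ leaves an error that, uniformly in $d$, is only $O(u^2/M)=N^{-\alpha+o(1)}$. The main term after averaging is $\delta^{u^k}\approx N^{-\sigma\log(1/\delta)}$, and since $\alpha>0$ is arbitrary, the error can dominate. Replacing $\delta$ by $\delta'$ at each stage is the paper's device and has no meaning for fixed $n$, where no uniform density exists. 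A cutoff at $d/M<\tilde\delta$ would discard almost everything once $k\ge 2$, since typically $d/M\approx\rho^{u^{k-1}}$.

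The fix is not to linearize. Set $g_0(\rho)=\rho$ and $g_j(\rho)=\bigl((g_{j-1}(\rho)-u/M)_+\bigr)^u$; each $g_j$ is convex and nondecreasing. The exact bound $\binom{d}{u}/\binom{M}{u}\ge\bigl((d-u)_+/M\bigr)^u$ together with your two Jensen steps then gives $\P(B_1\times\cdots\times B_j\subseteq S)\ge g_j(\rho(S))$ for $S\subseteq[M]^j$. Jensen over $n$ then gives $\E\bigl|\bigcap_{\boldsymbol m\in B_1\times\cdots\times B_k}A_{\boldsymbol m}\bigr|\ge N g_k(\delta)$. Estimate only at $\rho=\delta$, where $g_{j-1}(\delta)\ge N^{-o(1)}\gg u/M$. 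Hence $g_k(\delta)=(1-o(1))\delta^{u^k}$, and your rounding argument using $\tilde\delta<\delta$ finishes the proof.
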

To prove \cref{FIL}, we follow a strategy similar to the one used in the proof of \cref{Finitistic-Intersectivity-Lemma}. Let us briefly recall the idea. First, one obtains a lower bound for $\E(L_n)$, where $n\in [N]$ is chosen uniformly at random. Second, using Jensen's inequality, one derives a lower bound for
$$\E\left( \left|\bigcap_{m\in B} A_m\right|\right) $$
in terms of $\E(L_n)$, where $B\subseteq [M]$ is a set of cardinality $\lceil\sigma \log{N}\rceil$ picked at random. The theorem then follows by taking $N$ sufficiently large. In \cref{FIL} this strategy becomes somewhat more involved. We will write $(m_j)_{j\neq i}$ as short for $$(m_1,\ldots,m_{i-1},m_{i+1},\ldots,m_k) \in [M]^{k-1}.$$
For $n\in [N]$ and $(m_j)_{j\neq i}\in [M]^{k-1}$ define
    $$L_{i,(n,(m_j)_{j\neq i})}=\{m\in [M] : n\in A_{m_1,\ldots m_{i-1}, m ,m_{i+1}, \ldots m_k}\}. $$
Pick a uniform random variable $n\in [N]$. Also, pick $B_1,\ldots,B_k\subseteq [N]$ sets of cardinality $\lceil\sigma \log{N})^{1/k}\rceil $ at random. Our first step in this proof is to find a lower bound for
\begin{equation}\label{first-step-FIL}
   \E\left(\left|\bigcap_{(m_1,\ldots,m_{k-1})\in B_1\times \cdots \times B_{k-1}} L_{k,(n,(m_j)_{j\neq k})} \right|\right). 
\end{equation}
 To find a good bound, we want to use the assumption that the sets $A_{\boldsymbol{m}}$ have large density. In order to do so, we will fix the coordinates $m_j$ inductively. For this reason, it is convenient to prove a lower bound for the following sequence of objects: for every $i\in [k]$, $m_{i+1},\ldots,m_k\in [M]$, we give a lower bound for
    \begin{equation*}
         \E\left( \left| \bigcap_{( m_1,\ldots,m_{i-1})\in  B_1\times \cdots \times B_{i-1} } L_{i,(n,(m_j)_{j\neq i })} \right| \right).
    \end{equation*}
Taking $i=k$ then gives the desired bound in \cref{first-step-FIL}.
The second step is to apply Jensen's inequality in order to lower bound
$$\E\left( \left| \bigcap_{\boldsymbol{m}\in B_1\times \cdots \times B_{k}} A_{\boldsymbol{m}} \right| \right)$$
by an expression depending on the quantity in \cref{first-step-FIL}. The conclusion then follows by choosing $N$ sufficiently large.  

Having outlined the overall idea of the proof, we now proceed with the details.

\begin{proof}[Proof of \cref{FIL}.]
We start fixing the parameters formalizing the estimations associated to \cref{estimate-binomial-coeff}. Let $\eta_1,\ldots,\eta_k\in (0,1]$ so that $\tilde{\delta}=\eta_1\cdots\eta_k\delta$ and for sufficiently large $N$, we have that for each $i\in [k]$,
$$     \binom{M}{(\sigma \log(N))^{1/k}}^{-1}\binom{M (\eta_1\ldots\eta_{i-1}\delta)^{(\sigma \log(N))^{(i-2)/k}}}{(\sigma \log(N))^{1/k}}     $$
is greater or equal than
$$ \eta_i \left(  (\eta_1\ldots\eta_{i-1}\delta)^{(\sigma \log(N))^{(i-2)/k}}  \right)^{(\sigma \log(N))^{1/k}}.$$
The fact that such $\eta_1,\ldots,\eta_k$ and $N$ exists comes from the estimate associated to \cref{estimate-binomial-coeff} with $$n_1=M (\eta_1\ldots\eta_{i-1}\delta)^{(\sigma \log(N))^{(i-2)/k}},~n_2=M,~ \text{ and }u=(\sigma \log(N))^{1/k}.$$
Let $i\in [k]$, $n\in[N]$, and $m_1,\ldots,m_{i-1},m_{i+1},\ldots,m_k\in [N]$. Define
    $$L_{i,(n,(m_j)_{j\neq i})}=\{m\in [M] : n\in A_{m_1,\ldots m_{i-1}, m ,m_{i+1}, \ldots m_k}\}, $$
this is, all the coordinates $m=m_i$ such that $n\in A_{m_1,\ldots,m_k}$. 
    
    Pick $B_1\ldots,B_k\in \binom{[M]}{(\sigma \log{N})^{1/k}}$ and $n\in [N]$ uniformly at random and independent. We claim that for every $i\in [k]$, $m_{i+1},\ldots,m_k\in [M]$,
    \begin{equation}\label{SIL-eq-1}
         \E\left( \left| \bigcap_{( m_1,\ldots,m_{i-1})\in  B_1\times \cdots \times B_{i-1} } L_{i,(n,(m_j)_{j\neq i })} \right| \right)\geq M(\eta_1\cdots\eta_{i-1}\delta)^{(\sigma \log{N})^{(i-1)/k}},
    \end{equation}
    where $m_1,\ldots,m_{i-1}$ are dummy variables. In fact, for $i=1$ we take $m_2,\ldots,m_k\in [M]$ and compute:
    \begin{align*}
        \E\left( \left|  L_{1,(n,m_2,\ldots,m_k)} \right| \right)&= \sum_{m_1\in [M]} \P( n \in A_{m_1,\ldots,m_k})\\
        &\geq  M\delta \geq M\delta',
    \end{align*}   
    which ultimately gives \cref{SIL-eq-1} for $i=1$.
    
    Assume that we know the result for $i-1$, where $i\geq 2$. Fix $m_{i+1},\ldots,m_k\in [M]$. We have that
    \begin{align*}
        &\E\left( \left| \bigcap_{(m_1,\ldots,m_{i-1})\in  B_1\times \cdots \times B_{i-1} } L_{i,(n,(m_j)_{j\neq i })} \right| \right) \\
        & = \sum_{m_i\in [M]} \P\left(m_i \in \bigcap_{(m_1,\ldots,m_{i-1})\in  B_1\times \cdots \times B_{i-1} } L_{i,(n,(m_j)_{j\neq i })}\right)\\
        & =\sum_{m_i\in [M]} \E\left(\P\left(B_{i-1}\subseteq  \bigcap_{(m_1,\ldots,m_{i-2})\in  B_1\times \cdots \times B_{i-2} } L_{i,(n,(m_j)_{j\neq i-1 })}\Bigg| n,B_1,\ldots,B_{i-2} \right) \right)\\
        & =\sum_{m_i\in [M]} \binom{M}{(\sigma \log(N))^{1/k}}^{-1}\E\left( \binom{\left| \bigcap_{(m_1,\ldots,m_{i-2})\in  B_1\times \cdots \times B_{i-2} } L_{i,(n,(m_j)_{j\neq i-1 })} \right|}{(\sigma \log(N))^{1/k}}   \right),
    \end{align*}
where the second equality comes from basic properties of the conditional expectation, and in the third equality we used that, conditional to having chosen $n$, $B_1,\ldots,B_{i-2}$, the probability that $B_{i-1}$ is contained in
$$ \bigcap_{(m_1,\ldots,m_{i-2})\in  B_1\times \cdots \times B_{i-2} } L_{i,(n,(m_j)_{j\neq i-1 })} $$
is equal to    
$$\binom{M}{(\sigma \log(N))^{1/k}}^{-1} \binom{\left| \bigcap_{(m_1,\ldots,m_{i-2})\in  B_1\times \cdots \times B_{i-2} } L_{i,(n,(m_j)_{j\neq i-1 })} \right|}{(\sigma \log(N))^{1/k}} , $$
by definition $B_{i-1}$. 

    Define the function $\Psi$ as
    $$
    \Psi(x) =\begin{cases}
        0 & \text{ if } x<y\\
        \binom{x}{y} & \text{ if }x\geq y
    \end{cases} .
    $$
    We observe that $\Psi$ is convex for a fixed $y\in [N]$. Therefore, by Jensen inequality we have that
    \begin{align*}
        &\E\left( \binom{\left| \bigcap_{(m_1,\ldots,m_{i-2})\in  B_1\times \cdots \times B_{i-2} } L_{i,(n,(m_j)_{j\neq i-1 })} \right|}{(\sigma \log(N))^{1/k}}  \right)\\
        &\geq  \binom{\E\left(\left| \bigcap_{(m_1,\ldots,m_{i-2})\in  B_1\times \cdots \times B_{i-2} } L_{i,(n,(m_j)_{j\neq i-1 })} \right| \right)}{(\sigma \log(N))^{1/k}} \\
        &\geq  \binom{M (\eta_1\cdots\eta_{i-2}\delta)^{(\sigma \log(N))^{(i-2)/k}}}{(\sigma \log(N))^{1/k}} \\
    \end{align*}
    where we used the induction hypothesis in the last inequality. This way, we obtain 
    \begin{align*}
        &\E\left( \left| \bigcap_{(m_1,\ldots,m_{i-1})\in  B_1\times \cdots \times B_{i-1} } L_{i,(n,(m_j)_{j\neq i })} \right| \right) \\
        &\geq  \sum_{m_i\in [M]} \binom{M}{(\sigma \log(N))^{1/k}}^{-1}\binom{M (\eta_1\cdots\eta_{i-2}\delta)^{(\sigma \log(N))^{(i-2)/k}}}{(\sigma \log(N))^{1/k}}\\
        &\geq \eta_{i-1} \sum_{m_i\in [M]} \left(\frac{M (\eta_1\cdots\eta_{i-2}\delta)^{(\sigma \log(N))^{(i-2)/k}}}{M} \right)^{(\sigma \log(N))^{1/k}}\\
        &\geq M (\eta_1\cdots\eta_{i-1}\delta)^{(\sigma \log(N))^{(i-1)/k}},
    \end{align*}
    concluding the induction. We remark that our algebraic manipulations are all valid since $M (\delta')^{(\sigma \log(N))^{(k-1)/k}}\geq\exp(\alpha \log(N)- \log(\delta'^{-1}) (\sigma\log(N))^{(k-1)/k})\geq N^{\alpha/2}$ whenever $N$ is sufficiently large.
    
    Now, we can compute the following:
    \begin{align*}
        &\E\left( \left| \bigcap_{\boldsymbol{m}\in B_1\times \cdots \times B_{k}} A_{\boldsymbol{m}} \right| \right)= N \E\left( \P\left(n\in  \bigcap_{\boldsymbol{m}\in B_1\times \cdots \times B_{k}} A_{\boldsymbol{m}} \Bigg| n\right) \right)\\
        &= N \E\left( \P\left( B_k \subseteq \bigcap_{( m_1,\ldots,m_{k-1})\in  B_1\times \cdots \times B_{k-1} } L_{k,(n,(m_j)_{j\neq k })}  \Bigg| n \right)\right)\\
        &=N \binom{M}{ (\sigma \log(N))^{1/k}}^{-1}  \E\left( \binom{\left|\bigcap_{( m_1,\ldots,m_{k-1})\in  B_1\times \cdots \times B_{k-1} } L_{k,(n,(m_j)_{j\neq k })} \right|}{ (\sigma \log(N))^{1/k}} \right)\\
        &\geq N \binom{M}{ (\sigma \log(N))^{1/k}}^{-1} \binom{  \E\left(\left|\bigcap_{( m_1,\ldots,m_{k-1})\in  B_1\times \cdots \times B_{k-1} } L_{k,(n,(m_j)_{j\neq k })} \right| \right)}{ (\sigma \log(N))^{1/k}} \\
        &\geq N \binom{M}{ (\sigma \log(N))^{1/k}}^{-1} \binom{ M(\eta_1\cdots\eta_{k-1}\delta)^{(\sigma \log(N))^{(k-1)/k}}  }{ (\sigma \log(N))^{1/k}} \\
        &\geq N ((\eta_1\cdots\eta_{k}\delta)^{(\sigma \log(N))^{(k-1)/k}} )^{ (\sigma \log(N))^{1/k}}
    \end{align*}
    where we used \cref{SIL-eq-1} with $i=k$ in the second to last inequality. Hence, we get  
    $$\E\left( \left| \bigcap_{\boldsymbol{m}\in B_1\times \cdots \times B_{k}} A_{\boldsymbol{m}} \right| \right) \geq  N (\eta_1\cdots\eta_{k}\delta)^{(\sigma \log(N)) }  = N^{1-\sigma \log(\tilde{\delta}^{-1})},  $$
    finishing the proof.
\end{proof}

\subsection{Lower Asymptotics for $\phi$}

Let $k,N\in \N$ and $\delta>0 $. Define
$$ \phi_k(\delta,N)= \min_{A\subseteq [N], |A|\geq \delta N} \max_{B_1,\ldots,B_k\subseteq [N], B_1+\cdots+B_k\subseteq A} \min\{|B_1|,\ldots,|B_k|\}.$$
That is, $\phi_k(\delta,N)$ denotes the largest integer such that any set $A\subseteq [N]$ with $|A|\geq \delta N$ contains $B_1+\cdots+B_k$ as a subset, where $B_1,\ldots,B_k\subseteq [N]$ satisfy $\min\{|B_1|,\ldots, |B_k|\} \geq \phi_k(\delta,N)$.
We will prove that for a fixed $\delta>0$, for all $\alpha<(1/\log(\delta^{-1}))^{1/(k-1)}$ the expression $\phi_k(\delta,N)/\log(N)^{1/(k-1)}$ is bounded below by $\alpha$ for all $N\in \N$ sufficiently large. Equivalently, there is $\overline{N}\in \N$ such that for all $N\geq \overline{N}$ for all $A\subseteq [N]$ with $|A|\geq \delta N$, there are $B_1,\ldots,B_k\subseteq [N]$ with $B_1+\cdots+B_k\subseteq A$ and such that $|B_1|,\ldots,|B_k|\geq \alpha \log(N)^{1/(k-1)}$.

\begin{theorem}\label{theorem-B-general-form}
    For every $\delta>0$, we have 
    $$\liminf_{N\to \infty} \frac{\phi_k(\delta,N)}{\log(N)^{1/(k-1)}} \geq \left(\frac{1}{\log(1/\delta) }\right)^{1/(k-1)}.$$
\end{theorem}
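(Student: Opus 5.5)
We will derive the statement from \cref{FIL} applied with $k-1$ in place of $k$. This is the standard intersectivity approach: the last summand $B_k$ will be the common intersection of shifts of $A$.

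\textbf{Setup.} Fix $\alpha<(1/\log(\delta^{-1}))^{1/(k-1)}$. Choose $\tilde\delta<\delta$ and $\sigma<1/\log(\tilde\delta^{-1})$ so that $\sigma^{1/(k-1)}>\alpha$. This is possible because $1/\log(\tilde\delta^{-1})\to 1/\log(\delta^{-1})$ as $\tilde\delta\uparrow\delta$.

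\textbf{Shifted sets.} Take a small parameter $\beta\in(0,1)$ and set $M=\lfloor N^{\beta}\rfloor$. For $\boldsymbol m=(m_1,\ldots,m_{k-1})\in[M]^{k-1}$ define
$$A_{\boldsymbol m}=\{n\in[N]: n+m_1+\cdots+m_{k-1}\in A\}=(A-m_1-\cdots-m_{k-1})\cap[N].$$
Since the total shift is at most $(k-1)M=o(N)$, we have $|A_{\boldsymbol m}|\geq |A|-(k-1)M\geq \delta' N$ for any fixed $\delta'<\delta$ and large $N$. We choose $\delta'$ with $\tilde\delta<\delta'<\delta$. The hypothesis $M=\omega(N^{\alpha'})$ of \cref{FIL} holds with exponent $\alpha'=\beta/2$.

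\textbf{Applying \cref{FIL}.} \cref{FIL} (with $k-1$ factors, density $\delta'$, and $\tilde\delta,\sigma$ as above) gives $B_1,\ldots,B_{k-1}\subseteq[M]$ with $|B_i|\geq(\sigma\log N)^{1/(k-1)}\geq\alpha\log(N)^{1/(k-1)}$. It also gives
$$\Big|\bigcap_{\boldsymbol m\in B_1\times\cdots\times B_{k-1}}A_{\boldsymbol m}\Big|\geq N^{1-\sigma\log(\tilde\delta^{-1})}.$$
Let $B_k$ be this intersection. The exponent $1-\sigma\log(\tilde\delta^{-1})$ is positive, so $|B_k|$ is a positive power of $N$ and in particular exceeds $\alpha\log(N)^{1/(k-1)}$ for large $N$. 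By construction, $b_k+b_1+\cdots+b_{k-1}\in A$ for every choice of $b_i\in B_i$, so $B_1+\cdots+B_k\subseteq A$. All the $B_i$ lie in $[N]$ because $M\leq N$. Hence $\phi_k(\delta,N)\geq\alpha\log(N)^{1/(k-1)}$ for all large $N$. Letting $\alpha$ increase to the threshold gives the liminf bound.

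\textbf{Main obstacle.} The only delicate point is bookkeeping the parameters. We must lose only an arbitrarily small amount of density, both in passing from $\delta$ to $\delta'$ via the shift and in passing to $\tilde\delta$ inside \cref{FIL}. We must also ensure $\overline N$ from \cref{FIL} is uniform over all $A$. This holds because \cref{FIL} is stated uniformly over all families $(A_{\boldsymbol m})$ with density at least $\delta'$.

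The case $k=1$ makes the exponent $1/(k-1)$ meaningless, so we assume $k\geq2$. For $k=2$ the argument is exactly \cref{Finitistic-Intersectivity-Lemma} and recovers \cref{Finite-Sumsets-Growth-rate}.
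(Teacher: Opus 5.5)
Your proposal is correct and follows essentially the same route as the paper. You shift $A$ by $m_1+\cdots+m_{k-1}$ for $\boldsymbol m\in[M]^{k-1}$ with $M$ a power of $N$, apply \cref{FIL} with $k-1$ factors, take $B_k$ to be the resulting intersection, and let the density and $\sigma$ parameters tend to their limits. Your only difference is the cosmetic choice $M=\lfloor N^{\beta}\rfloor$ in place of the paper's $M=N^{1-\epsilon}$, and your parameter bookkeeping ($\tilde\delta<\delta'<\delta$, $\sigma^{1/(k-1)}>\alpha$) is if anything cleaner than the paper's.
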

\begin{proof}
Let $\delta'<\tilde{\delta}<\delta$ and $\epsilon>0$. Let $N \in \N$ be large enough that $\delta N-(k-1)N^{1-\epsilon}\geq \tilde{\delta} N$. Let $M=N^{1-\epsilon}$ and take 
$\sigma>0$ such that
    $$\sigma< \frac{1}{\log(\delta'^{-1})} .$$
    Define for each $\boldsymbol{m}\in [M]^{k-1}$ the set $A_{\boldsymbol{m}}= (A-(m_1+\cdots+m_{k-1}))\cap [N]$. Notice that $|A_{\boldsymbol{m}}|\geq \delta N-(k-1)N^{1-\epsilon}$.  
    By \cref{FIL}, we can find sets $B_1,\ldots,B_{k-1}\subseteq [M]$ with $|B_1|,\ldots,|B_{k-1}|\geq (\sigma \log(N))^{1/(k-1)}$ such that 
    $$\left|\bigcap_{\boldsymbol{m}\in B_1\times\cdots \times B_{k-1}} A_{\boldsymbol{m}}\right|\geq N^{1-\sigma\log(\delta'^{-1})}. $$
    This implies that there are $B_1,\ldots,B_k\subseteq [N]$ with $|B_1|,\ldots,|B_k|\geq (\sigma \log(N))^{1/(k-1)}$ such that $B_1+\cdots+B_k\subseteq A$. Thus
    $$ \sigma^{1/(k-1)} \leq \liminf_{N\to \infty} \frac{\phi_k(\delta,N)}{\log(N)^{1/(k-1)}}. $$
    Taking $\sigma \nearrow  \left(\frac{1}{\log(\delta'^{-1})}\right) $ we have
    $$ \left(\frac{1}{\log(\delta'^{-1})}\right)^{1/(k-1)}\leq \liminf_{N\to \infty} \frac{\phi_k(\delta,N)}{\log(N)^{1/(k-1)}}. $$
    Taking $\delta' \nearrow \delta$ we conclude that 
        $$ \left(\frac{1}{\log(\delta^{-1})}\right)^{1/(k-1)}\leq \liminf_{N\to \infty} \frac{\phi_k(\delta,N)}{\log(N)^{1/(k-1)}}, $$
        finishing the proof.
\end{proof}

\begin{remark}
    \cref{Finite-Sumsets-Growth-rate} follows as from \cref{theorem-B-general-form} by taking $k = 2$.
\end{remark}

\small{
\bibliographystyle{abbrv}
\bibliography{refs}

}
\bigskip
\noindent
Felipe Hernández\\
\textsc{{\'E}cole Polytechnique F{\'e}d{\'e}rale de Lausanne} (EPFL)\par\nopagebreak
\noindent
\href{mailto:felipe.hernandezcastro@epfl.ch}
{\texttt{felipe.hernandezcastro@epfl.ch}}

\bigskip
\noindent
Luke Hetzel\\
\textsc{University of Denver} \par\nopagebreak
\noindent
\href{mailto:luke.hetzel@du.edu}
{\texttt{luke.hetzel@du.edu}}

\end{document}